\documentclass[a4paper,12pt,reqno]{amsart}

\usepackage{amsmath}
\usepackage{amssymb}
\usepackage{amsfonts}
\usepackage{graphicx}
\usepackage{mathtools} %for prescript
\usepackage[colorlinks]{hyperref}
\renewcommand\eqref[1]{(\ref{#1})} %Need with hyperref

\graphicspath{ {images/} }
%%%%%%%%%%%%%%%%%%%%
\setlength{\textwidth}{15.2cm}
\setlength{\textheight}{22.7cm}
\setlength{\topmargin}{0mm}
\setlength{\oddsidemargin}{3mm}
\setlength{\evensidemargin}{3mm}
\setlength{\footskip}{1cm}

% defining Re, Im, Res notations
\providecommand{\Real}{\mathop{\rm Re}\nolimits}%
%
% Note: This one has limits

\title[Variable-coefficient Prabhakar differential equations]{Prabhakar-type linear differential equations with variable coefficients}

\author[A. Fernandez]{Arran Fernandez}
\address{
	Arran Fernandez:
	\endgraf
	Department of Mathematics
	\endgraf
	Eastern Mediterranean University
	\endgraf
	Northern Cyprus, via Mersin-10, Turkey
	\endgraf
	{\it E-mail address:} {\rm arran.fernandez@emu.edu.tr}}

\author[J. E. Restrepo]{Joel E. Restrepo}
\address{
	Joel E. Restrepo:
	\endgraf
	Department of Mathematics 
	\endgraf
	Nazarbayev University
	\endgraf
	Kazakhstan
	\endgraf
	and
	\endgraf
	Department of Mathematics: Analysis, Logic and Discrete Mathematics
	\endgraf
	Ghent University, Krijgslaan 281, Building S8, B 9000 Ghent
	\endgraf
	Belgium
	\endgraf
	{\it E-mail address:} {\rm cocojoel89@yahoo.es;\,joel.restrepo@ugent.be}}

\author[D. Suragan]{Durvudkhan Suragan}
\address{
	Durvudkhan Suragan:
	\endgraf
	Department of Mathematics
	\endgraf
	Nazarbayev University
	\endgraf
	Kazakhstan
	\endgraf
	{\it E-mail address:} {\rm durvudkhan.suragan@nu.edu.kz}}

%First Author

%AMS Subject Classification
\subjclass[2010]{26A33, 34A08, 33E12.}
%Key Words and Phrases
\keywords{Fractional differential equations, Prabhakar fractional calculus, Series solutions, Analytical solutions, Fixed point theory.}

%\thanks{The authors were supported by the Nazarbayev University Program 091019CRP2120. No new data was collected or generated during the course of this research.}

%%% Theorem Like Envirouments

\newtheoremstyle{theorem}%name
{10pt}          % space above
{10pt}  % space below
{\sl}  % bofy font
{\parindent}     % ident - empty=no indent,  \parindent= paragraph indent
{\bf}  % thm head font
{. }    % punctuation after thm head
{ }    % space after thm head: `` ``=normal \newline=linebreak
{}     % thm head specification
\theoremstyle{theorem}

\numberwithin{equation}{section}
\theoremstyle{plain}
\newtheorem{thm}{Theorem}[section]

\newtheorem{lem}[thm]{Lemma}

\theoremstyle{definition}
\newtheorem{defn}[thm]{Definition}
\newtheorem{rem}[thm]{Remark}

\newtheoremstyle{defi}%name
{10pt}          % space above
{10pt}  % space below
{\rm}  % bofy font
{\parindent}     % ident - empty=no indent,  \parindent= paragraph indent
{\bf}  % thm head font
{. }    % punctuation after thm head
{ }    % space after thm head: `` ``=normal \newline=linebreak
{}     % thm head specification
\theoremstyle{defi}

%\def\proofname{\indent {\sl Proof.}}

%%%% Local Definitions start here

%%%% End of Local Definitions
%%%%%%%%%%%%%%%%%%%%%%%%%%%%%%%%%%%%%%%%%%
%%%%%%%%%%%%%%%%%%%%%%%%%%%%%%%%%%%%%%%%%%

\begin{document}
 	\begin{abstract}
Linear differential equations with variable coefficients and Prabhakar-type operators featuring Mittag-Leffler kernels are solved. In each case, the unique solution is constructed explicitly as a convergent infinite series involving compositions of Prabhakar fractional integrals. We also extend these results to Prabhakar operators with respect to functions. As an important illustrative example, we consider the case of constant coefficients, and give the solutions in a more closed form by using multivariate Mittag-Leffler functions.
	\end{abstract}
	\maketitle
	\tableofcontents

\section{Introduction}

Fractional differential equations (FDEs) are widely studied, both from the pure mathematical viewpoint \cite{kilbas,podlubny,samko} and due to their applications in assorted fields of science and engineering \cite{hilfer,sun-etal}. The simple case of linear ordinary FDEs with constant coefficients has been thoroughly studied in classical textbooks such as \cite{kilbas,miller}, but many other FDE problems are still providing challenges to mathematical researchers.

Explicit solutions have been constructed for several classes of linear FDEs with variable coefficients. Different approaches have been considered to obtain representations of solutions for such equations, including Green's functions \cite{RL}, the Banach fixed point theorem \cite{first,analitical}, power series methods \cite{AML,kilbasalpha,vcapl}, and Volterra integral equations \cite{vcserbia1,vcserbia2}. The tools used in \cite{first,RL,analitical} yielded representations of the solutions by uniformly convergent infinite series involving nested compositions of Riemann--Liouville fractional integrals. This is relatively easy to handle compared with other representations where sometimes reproducing kernels are involved, and the nested fractional integrals can even be eliminated to obtain a formula more suitable for numerical calculation \cite{FRS}. The starting point of the method in these papers was to exchange the original fractional differential equation for an equivalent integral equation, a very useful technique which, to the best of our knowledge, was first used for FDEs by Pitcher and Sewel in \cite{AMS-1938}.

Recently, the study of explicit solutions of FDEs with variable coefficients has been growing in attention and opening new directions of investigation and application. After the works \cite{RL,analitical} where the problem was solved in the classical settings of Riemann--Liouville and Caputo fractional derivatives, several other papers have extended the same methodology to other types of fractional derivatives, such as Caputo derivatives with respect to functions and derivatives with non-singular Mittag-Leffler kernels \cite{RRS,FRS:AB}. This method has also been applied to partial differential equations \cite{RSade}, and in the investigation of inverse fractional Cauchy problems of wave and heat type, it was also used to define a new class of time-fractional Dirac type operators with time-variable coefficients and with applications in fractional Clifford analysis \cite{BRS,RRSdirac}. Such operators of fractional Dirac type lead to the consideration of a wide range of fractional Cauchy problems, whose solutions were given explicitly. 

In this paper, we study the explicit solutions of variable-coefficient FDEs in the setting of Prabhakar fractional derivatives. The origins of Prabhakar fractional calculus lie in the fractional integral operator introduced in \cite{Prab1971}, which was more deeply studied in \cite{generalizedfc} and extended to fractional derivatives in \cite{prabcap}. Recently, Prabhakar fractional calculus has been intensively studied both for its pure mathematical properties \cite{fernandez-baleanu,giusti-etal} and for its assorted applications \cite{garrappa-maione,tomovski-dubbeldam-korbel}, so Prabhakar fractional differential equations have become a topic of interest \cite{RS:MMAS}. For this reason, we have conducted the current research into fractional differential equations with variable coefficients and Prabhakar derivatives, constructing explicit solutions using the methodology of \cite{analitical}.

The structure of the paper is given as follows. In Section \ref{preliPrabFDE}, we collect all necessary definitions and preliminary results on Prabhakar fractional calculus, as well as Prabhakar operators with respect to functions. Section \ref{mainPrabFDE} is devoted to the main results: proving existence and uniqueness for the considered Prabhakar-type linear differential equation with variable coefficients, constructing explicitly a canonical set of solutions, and finally finding the explicit form of the unique solution, both for the Prabhakar-type differential equation and also for its generalisation using Prabhakar operators with respect to functions. In Section \ref{FDEPrabconstcoe}, as an illustrative example of our general results, we write explicit solutions for the general linear Prabhakar-type FDE with constant coefficients, by using the multivariate Mittag-Leffler function.

\section{Preliminaries}\label{preliPrabFDE}

Let us recall the main definitions and auxiliary results that will be used in this paper.

\subsection{Prabhakar fractional calculus}

Before introducing the operators of Prabhakar fractional calculus, we need to recall the three-parameter Mittag-Leffler function $E^{\theta}_{\alpha,\beta}$, which was introduced and studied by Prabhakar in \cite{Prab1971}: 
\[
E^{\theta}_{\alpha,\beta}(z)=\sum_{n=0}^{\infty}\frac{(\theta)_n}{\Gamma(\alpha n+\beta)}\cdot\frac{z^n}{n!},\quad z,\beta,\alpha,\theta\in\mathbb{C},\textrm{Re}\,\alpha>0,
\]
where $\Gamma(\cdot)$ is the Gamma function and $(\theta)_n$ is the Pochhammer symbol \cite[\S2.1.1]{pocha}, i.e. $(\theta)_n=\frac{\Gamma(\theta+n)}{\Gamma(\theta)}$ or
\[
(\theta)_0=1,\quad (\theta)_n=\theta(\theta+1)\cdots(\theta+n-1)\quad (n=1,2,\ldots).
\]
For $\theta=1$, we obtain the two-parameter Mittag-Leffler function $E_{\alpha,\beta}$, namely
\[
E_{\alpha,\beta}(z)=\sum_{n=0}^{\infty}\frac{z^n}{\Gamma(\alpha n+\beta)},\quad z,\beta,\alpha\in\mathbb{C},\textrm{Re}\,\alpha>0.
\]
For $\beta=\theta=1$, we obtain the classical Mittag-Leffler function  $E_{\alpha}(z)=E_{\alpha,1}(z)$. For more details of various types of the Mittag-Leffler function, see e.g. the book \cite{mittag}.

Briefly, we discuss the convergence of the above series. Applying the ratio test to $c_n=\frac{(\theta)_n}{\Gamma(\alpha k+\beta)}\frac{z^n}{n!}$ and using Stirling's approximation \cite[1.18(4)]{pocha}, we have
\begin{align*}
\left|\frac{c_{n+1}}{c_n}\right|&=\left|\frac{\frac{(\theta)_{n+1}}{\Gamma(\alpha(n+1)+\beta)}\frac{z^{n+1}}{(n+1)!}}{\frac{(\theta)_n}{\Gamma(\alpha n+\beta)}\frac{z^n}{n!}}\right|=|z|\frac{|\theta+n|}{n+1}\frac{|\Gamma(\alpha n+\beta)|}{|\Gamma(\alpha n+\beta+\alpha)|} \\
&\sim |z|\frac{|\theta+n|}{n+1}\frac1{|\alpha n+\beta|^{\Real \,\alpha}}\to 0,\quad n\to\infty,
\end{align*}
and we see why the assumption $\Real (\alpha)>0$ is necessary for the definition.

We now recall the Prabhakar integral operator, which is defined by
\begin{equation}\label{IPrab}
\left(\prescript{}{a}{\mathbb{I}}_{\alpha,\beta,\omega}^{\theta}f\right)(t)=\int_a^t (t-s)^{\beta-1}E^{\theta}_{\alpha,\beta}(\omega(t-s)^{\alpha})f(s)\,\mathrm{d}s,
\end{equation}
where $\alpha,\beta,\theta,\omega\in\mathbb{C}$ with $\Real (\alpha)>0$ and $\Real (\beta)>0$. This operator is bounded for functions $f\in L^1(a,b)$ for any $b>a$; for more details, see \cite[Theorems 4,5]{generalizedfc}. Note that for $\theta=0$, $\prescript{}{a}{\mathbb{I}}_{\alpha,\beta,\omega}^{0}$ coincides with the Riemann--Liouville fractional integral of order $\beta$ \cite[Sections 2.3 and 2.4]{samko}:
\begin{equation}\label{fraci}
\prescript{RL}{a}I^{\beta}f(t)=\frac1{\Gamma(\beta)}\int_a^t (t-s)^{\beta-1}f(s)\,\mathrm{d}s,\quad \beta\in\mathbb{C},\quad\Real (\beta)>0.
\end{equation}
Two important properties of the Prabhakar operator are its semigroup property (in the parameters $\beta,\theta$) and its series formula, which were proved in \cite{generalizedfc} and \cite{fernandez-baleanu-srivastava} respectively. These are:
\begin{align}
\prescript{}{a}{\mathbb{I}}_{\alpha,\beta_1,\omega}^{\theta_1}\circ\prescript{}{a}{\mathbb{I}}_{\alpha,\beta_2,\omega}^{\theta_2}=\prescript{}{a}{\mathbb{I}}_{\alpha,\beta_1+\beta_2,\omega}^{\theta_1+\theta_2},\quad\Real (\alpha)>0,\Real (\beta_i)>0,i=1,2; \label{PI:semi} \\
\left(\prescript{}{a}{\mathbb{I}}_{\alpha,\beta,\omega}^{\theta}f\right)(t)=\sum_{n=0}^{\infty}\frac{(\theta)_n\omega^n}{n!}\prescript{RL}{a}I^{\alpha n+\beta}f(t),\quad\Real (\alpha)>0,\Real (\beta)>0. \label{PI:series}
\end{align}
Thanks to all of the above identities and relations, the Prabhakar integral operator \eqref{IPrab} is considered \cite{fernandez-baleanu,generalizedfc} as a generalised fractional integral operator, giving rise to a type of fractional calculus involving Mittag-Leffler kernels. It is a complete model of fractional calculus including fractional derivatives as well as integrals, as we shall see in the following statements. Firstly we recall the space $AC^n(a,b)$ ($n\in\mathbb{N}$), which is the set of real-valued functions $f$ whose derivatives exist up to order $n-1$ on $(a,b)$ and such that $f^{(n-1)}$ is an absolutely continuous function.

The Prabhakar derivative of Riemann--Liouville type is defined \cite{prabcap} by 
\begin{align}\label{DPrabRL}
\left(\prescript{RL}{a}{\mathbb{D}}_{\alpha,\beta,\omega}^{\theta}f\right)(t)&=\frac{\mathrm{d}^m}{\mathrm{d}t^m}\left(\prescript{}{a}{\mathbb{I}}_{\alpha,m-\beta,\omega}^{-\theta}f(t)\right) \nonumber\\
&=\frac{\mathrm{d}^m}{\mathrm{d}t^m}\int_a^t (t-s)^{m-\beta-1}E^{-\theta}_{\alpha,m-\beta}(\omega(t-s)^{\alpha})f(s)\,\mathrm{d}s,
\end{align}
where $\alpha,\beta,\theta,\omega\in\mathbb{C}$ with $\Real (\alpha)>0$, $\Real (\beta)\geqslant0$, and $m=\lfloor \Real \,\beta\rfloor+1$ (where $\lfloor\cdot\rfloor$ is the floor function) and $f\in AC^m(a,b)$.

The following inversion result for Prabhakar integrals and derivatives follows immediately from the semigroup property \eqref{PI:semi} and the classical fundamental theorem of calculus:
\begin{equation} \label{thm2.5PrabFDE}
\prescript{RL}{a}{\mathbb{D}}_{\alpha,\beta_2,\omega}^{\theta_2}\circ\prescript{}{a}{\mathbb{I}}_{\alpha,\beta_1,\omega}^{\theta_1}=
\begin{cases}
\prescript{}{a}{\mathbb{I}}_{\alpha,\beta_1-\beta_2,\omega}^{\theta_1-\theta_2},&\quad\Real (\beta_1)>\Real (\beta_2)\geqslant0; \\\\
\prescript{RL}{a}{\mathbb{D}}_{\alpha,\beta_2-\beta_1,\omega}^{\theta_2-\theta_1},&\quad\Real (\beta_2)\geqslant\Real (\beta_1)>0,
\end{cases}
\end{equation}
where $\alpha,\beta_i,\theta_i,\omega\in\mathbb{C}$ such that $\Real (\alpha)>0$ and $\Real (\beta_i)>0$ for $i=1,2$. In particular, for $\beta,\theta\in\mathbb{C}$ such that $\Real (\beta)>0$, we have
\[
\prescript{RL}{a}{\mathbb{D}}_{\alpha,\beta,\omega}^{\theta}\prescript{}{a}{\mathbb{I}}_{\alpha,\beta,\omega}^{\theta}f(t)=f(t),\quad f\in C[a,b].
\]

The Prabhakar derivative of Caputo type, sometimes also called the regularised Prabhakar derivative, is usually defined \cite{prabcap} by
\begin{align}
\left(\prescript{C}{a}{\mathbb{D}}_{\alpha,\beta,\omega}^{\theta}f\right)(t)&=\prescript{}{a}{\mathbb{I}}_{\alpha,m-\beta,\omega}^{-\theta}\left(\frac{\mathrm{d}^m}{\mathrm{d}t^m}f(t)\right) \nonumber\\
&=\int_a^t (t-s)^{m-\beta-1}E^{-\theta}_{\alpha,m-\beta}(\omega(t-s)^{\alpha})f^{(m)}(s)\,\mathrm{d}s, \label{DPrab}
\end{align}
where $\alpha,\beta,\theta,\omega\in\mathbb{C}$ with $\Real (\alpha)>0$, $\Real (\beta)\geqslant0$, and $m=\lfloor\Real \beta\rfloor+1$, and $f\in AC^m(a,b)$. Note that $f\in AC^m[a,b]$ is enough for \eqref{DPrab} to be well-defined, since this guarantees $f^{(m)}$ exists almost everywhere and is in $L^1[a,b]$, therefore the fractional integral of $f^{(m)}$ exists; we do not need stronger conditions such as $f\in C^m[a,b]$ for the existence of the Caputo-type derivative. Boundedness of the operator $\prescript{C}{a}{\mathbb{D}}_{\alpha,\beta,\omega}^{\theta}$ is discussed in \cite[Theorem 4]{polito}. For $\theta=0$, this operator coincides with the original Caputo fractional derivative.

We also have the following alternative formula for the Caputo--Prabhakar derivative, which is equivalent to \eqref{DPrab} for any function $f\in AC^m(a,b)$:
\begin{equation}\label{alternativePrabh}
\left(\prescript{C}{a}{\mathbb{D}}_{\alpha,\beta,\omega}^{\theta}f\right)(t)=\prescript{RL}{a}{\mathbb{D}}_{\alpha,\beta,\omega}^{\theta}\left[f(t)-\sum_{j=0}^{m-1}\frac{f^{(j)}(a)}{j!}(t-a)^{j}\right],
\end{equation}
where $\alpha,\beta,\theta,\omega\in\mathbb{C}$ with $\Real (\alpha)>0$, $\Real (\beta)\geqslant0$, and $m=\lfloor\Real\beta\rfloor+1$. The equivalence of \eqref{DPrab} and \eqref{alternativePrabh} was proved in \cite[Proposition 4.1]{prabcap}. In this paper, we shall use them both interchangeably. %It provides an extension of the Caputo--Prabhakar derivative operator, since the right-hand side of \eqref{DPrab} requires $f\in C^m(a,b)$ but the right-hand side of \eqref{alternativePrabh} can be defined for any function $f\in AC^m(a,b)$, a larger function space. Since both formulae are equivalent for $f$ in the smaller function space, \eqref{alternativePrabh} provides an extension of \eqref{DPrab}. Therefore, in this paper, we shall use \eqref{alternativePrabh} as the definition for the Caputo--Prabhakar fractional derivative.

The Prabhakar derivatives, of both Riemann--Liouville and Caputo type, have series formulae analogous to \eqref{PI:series}, namely:
\begin{align}
\left(\prescript{RL}{a}{\mathbb{D}}_{\alpha,\beta,\omega}^{\theta}f\right)(t)=\sum_{n=0}^{\infty}\frac{(-\theta)_n\omega^n}{n!}\prescript{RL}{a}I^{\alpha n-\beta}f(t),\quad\Real (\alpha)>0,\Real (\beta)\geqslant0, \label{PR:series}\\
\left(\prescript{C}{a}{\mathbb{D}}_{\alpha,\beta,\omega}^{\theta}f\right)(t)=\sum_{n=0}^{\infty}\frac{(-\theta)_n\omega^n}{n!}\prescript{RL}{a}I^{\alpha n+m-\beta}f^{(m)}(t),\quad\Real (\alpha)>0,\Real (\beta)\geqslant0, \label{PC:series}
\end{align}
where in \eqref{PR:series} we use the analytic continuation of the Riemann--Liouville integral (called the Riemann--Liouville derivative) for the finitely many cases where $\Real (\alpha n-\beta)<0$. Note that the first term of the series in \eqref{PC:series} is precisely the classical Caputo derivative to order $\beta$ of $f$, defined by
\[
\prescript{C}{a}D^{\beta}f(t)=\prescript{RL}{a}I^{m-\beta}\left(\frac{\mathrm{d}^m}{\mathrm{d}t^m}f(t)\right)=\frac{1}{\Gamma(m-\beta)}\int_a^t (t-s)^{m-\beta-1}f^{(m)}(s)\,\mathrm{d}s,
\]
where $m:=\lfloor\Real \beta\rfloor+1$ as usual.

\begin{lem}\label{importantproPrabFDE}
If $\alpha,\beta,\theta,\omega\in\mathbb{C}$ with $\Real (\alpha)>0$, $\Real (\beta)>0$, and $f\in C[a,b]$, then the following statements hold:
\begin{enumerate}
\item $\left(\prescript{}{a}{\mathbb{I}}_{\alpha,\beta,\omega}^{\theta}f\right)(t)$ is a continuous function on $[a,b]$.
\item $\displaystyle\lim_{t\to a+}\left(\prescript{}{a}{\mathbb{I}}_{\alpha,\beta,\omega}^{\theta}f\right)(t)=0$.
\item If $\beta',\theta'\in\mathbb{C}$ with $\Real (\beta)>\Real (\beta')\geqslant0$, then
\[
\prescript{C}{a}{\mathbb{D}}_{\alpha,\beta',\omega}^{\theta'}\circ\prescript{}{a}{\mathbb{I}}_{\alpha,\beta,\omega}^{\theta}f(t)=\prescript{}{a}{\mathbb{I}}_{\alpha,\beta-\beta',\omega}^{\theta-\theta'}f(t).
\]
In particular, letting $\beta'\to\beta$ and $\theta'=\theta$, we have 
\[
\prescript{C}{a}{\mathbb{D}}_{\alpha,\beta,\omega}^{\theta}\circ\prescript{}{a}{\mathbb{I}}_{\alpha,\beta,\omega}^{\theta}f(t)=f(t).
\]
\end{enumerate}
\end{lem}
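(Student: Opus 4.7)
The plan is to use the series representation \eqref{PI:series} as the main tool throughout, since it converts questions about the Prabhakar integral into questions about classical Riemann--Liouville integrals. For (1), I will establish continuity via the integral representation \eqref{IPrab}: since the three-parameter Mittag-Leffler function is entire, the factor $E^{\theta}_{\alpha,\beta}(\omega u^{\alpha})$ is bounded uniformly on the compact set $u \in [0,b-a]$, so the kernel behaves no worse than $(t-s)^{\Real(\beta)-1}$, which is $L^1$-integrable against a continuous $f$. Continuity in $t$ then follows by a standard splitting argument, separating $g(t_2)-g(t_1)$ into a small integral over $[t_1,t_2]$ and an integral over $[a,t_1]$ of the kernel difference, together with dominated convergence. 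For (2), I will use the direct estimate $|g(t)| \leqslant \|f\|_{\infty}\, M\, (t-a)^{\Real(\beta)}/\Real(\beta)\to 0$ as $t\to a^+$, where $g(t)=(\prescript{}{a}{\mathbb{I}}_{\alpha,\beta,\omega}^{\theta}f)(t)$ and $M$ bounds $|E^{\theta}_{\alpha,\beta}(\omega u^{\alpha})|$ on $[0,b-a]$.

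For part (3), the strategy is to reduce the Caputo--Prabhakar derivative of $g$ to the Riemann--Liouville--Prabhakar one via the alternative formula \eqref{alternativePrabh}, and then invoke the inversion identity \eqref{thm2.5PrabFDE}. The crucial step is to show $g^{(j)}(a)=0$ for each $j=0,1,\ldots,m'-1$ where $m'=\lfloor\Real(\beta')\rfloor+1$. Since $m'-1\leqslant \Real(\beta')<\Real(\beta)\leqslant \Real(\alpha n+\beta)$ for every $n\geqslant 0$, each summand $\prescript{RL}{a}I^{\alpha n+\beta}f$ in the series \eqref{PI:series} for $g$ has a $j$-th derivative that equals $\prescript{RL}{a}I^{\alpha n+\beta-j}f$ and vanishes at $t=a$. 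I will justify term-by-term differentiation using locally uniform convergence on $[a,b]$ (the ratio estimate in the preliminaries carries over after differentiation, as it only shifts $\beta$ by $-j$). Consequently the Taylor polynomial correction in \eqref{alternativePrabh} disappears, so $\prescript{C}{a}{\mathbb{D}}_{\alpha,\beta',\omega}^{\theta'}g=\prescript{RL}{a}{\mathbb{D}}_{\alpha,\beta',\omega}^{\theta'}g$, and \eqref{thm2.5PrabFDE} immediately delivers $\prescript{}{a}{\mathbb{I}}_{\alpha,\beta-\beta',\omega}^{\theta-\theta'}f$.

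For the special case obtained by letting $\beta' \to \beta$ with $\theta'=\theta$, I will pass to the limit in the identity just established. Setting $\theta'=\theta$ kills every term with $n\geqslant 1$ in the series expansion \eqref{PI:series} of $\prescript{}{a}{\mathbb{I}}_{\alpha,\beta-\beta',\omega}^{\theta-\theta'}f$, leaving the single Riemann--Liouville integral $\prescript{RL}{a}I^{\beta-\beta'}f$, which tends uniformly to $f$ on $[a,b]$ as $\beta-\beta'\to 0^+$ by the classical continuity property of Riemann--Liouville integrals at order zero. The main technical obstacle throughout will be the rigorous verification of locally uniform convergence of the differentiated Prabhakar series near the boundary point $t=a$, in order to interchange differentiation and summation; once this is handled, every other step reduces to tools already assembled in the preliminaries.
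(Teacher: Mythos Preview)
Your proposal is correct and follows essentially the same skeleton as the paper's proof for part (3): reduce the Caputo--Prabhakar derivative to the Riemann--Liouville one via \eqref{alternativePrabh}, show that the Taylor polynomial correction vanishes, and finish with \eqref{thm2.5PrabFDE}. The differences are at the level of implementation rather than strategy.

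For parts (1) and (2), the paper simply cites \cite[Theorem~5]{generalizedfc} for continuity and invokes the mean value theorem for integrals for the limit at $a$; your direct estimates via the uniform bound $M$ on $|E^{\theta}_{\alpha,\beta}(\omega u^{\alpha})|$ and the dominated-convergence splitting are more self-contained and equally valid. For the vanishing of $g^{(j)}(a)$ in part (3), the paper takes a cleaner shortcut: instead of differentiating the series \eqref{PI:series} term by term, it quotes \cite[Theorem~7]{generalizedfc}, which states directly that $\big(\prescript{}{a}{\mathbb{I}}_{\alpha,\beta,\omega}^{\theta}f\big)^{(j)}=\prescript{}{a}{\mathbb{I}}_{\alpha,\beta-j,\omega}^{\theta}f$, and then applies part (2) (since $\Real(\beta-j)>0$ for $j\leqslant m'-1<\Real\beta$). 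This sidesteps the uniform-convergence verification you flag as your main technical obstacle; in fact, your differentiated series \emph{is} the series \eqref{PI:series} for $\prescript{}{a}{\mathbb{I}}_{\alpha,\beta-j,\omega}^{\theta}f$, so once you recognise this you can appeal to the known convergence of that series rather than re-establishing it from scratch. Your limit argument for the case $\beta'\to\beta$, $\theta'=\theta$ is also sound and slightly more explicit than the paper, which leaves that passage essentially to the reader.
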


\begin{proof}
The first statement follows by \cite[Theorem 5]{generalizedfc}. The second statement is an application of the mean value theorem for integrals; note that the continuity of $f$ on the closed interval $[a,b]$ is vital for this.

Let us now prove the third statement. Setting $m=\lfloor\Real\beta'\rfloor+1$, we have by the formula \eqref{alternativePrabh}:
\begin{align*}
\prescript{C}{a}{\mathbb{D}}_{\alpha,\beta',\omega}^{\theta'}\circ\prescript{}{a}{\mathbb{I}}_{\alpha,\beta,\omega}^{\theta}f(t)&=\prescript{RL}{a}{\mathbb{D}}_{\alpha,\beta',\omega}^{\theta'}\left[\prescript{}{a}{\mathbb{I}}_{\alpha,\beta,\omega}^{\theta}f(t)-\sum_{j=0}^{m-1}\frac{t^j}{j!}\Big(\prescript{}{a}{\mathbb{I}}_{\alpha,\beta,\omega}^{\theta}f\Big)^{(j)}(a)\right] \\
&=\prescript{}{a}{\mathbb{I}}_{\alpha,\beta-\beta',\omega}^{\theta-\theta'}f(t)-\sum_{j=0}^{m-1}\Big(\prescript{}{a}{\mathbb{I}}_{\alpha,\beta,\omega}^{\theta}f\Big)^{(j)}(a)\cdot\prescript{RL}{a}{\mathbb{D}}_{\alpha,\beta',\omega}^{\theta'}\left(\frac{t^j}{j!}\right),
\end{align*}
where in the last line we used \eqref{thm2.5PrabFDE}. For each value of $j=0,1,\cdots,m-1$, since $j\leqslant m-1=\lfloor\Real\beta'\rfloor\leqslant\Real\beta'<\Real\beta$ and therefore $\Real (\beta-j)>0$, by \cite[Theorem 7]{generalizedfc} and the first statement of this Lemma, it follows that:
\[
\lim_{t\to a+}\Big(\prescript{}{a}{\mathbb{I}}_{\alpha,\beta,\omega}^{\theta}f\Big)^{(j)}(t)=\lim_{t\to a+}\left(\prescript{}{a}{\mathbb{I}}_{\alpha,\beta-j,\omega}^{\theta}f\right)(t)=0,
\]
which completes the proof.
\end{proof}

In the last part of Lemma \ref{importantproPrabFDE}, we have proved one composition relation for the Prabhakar operators, namely the Caputo-type derivative of the fractional integral. We will also need the converse, a formula for the fractional integral of the Caputo-type derivative, which will be stated in the following function space \cite{kilbas-marzan}:
\[
C^{\beta,m-1}[a,b]:=\left\{v\in C^{m-1}[a,b]\;:\; \prescript{C}{a}D^{\beta}v\text{ exists in }C[a,b]\right\}.
\]
Kilbas and Marzan used this space in \cite[\S3]{kilbas-marzan} for solving some Caputo fractional differential equations. It is a suitable setting because it guarantees the existence of Caputo fractional derivatives up to a given order without any further assumptions required. Given our context of Prabhakar operators, we shall endow it with the following norm:
\[
\|v\|_{C^{\beta,m-1}}=\sum_{k=0}^{m-1}\left\|v^{(k)}\right\|_{\infty}+\big\|\prescript{C}{a}{\mathbb{D}}_{\alpha,\beta,\omega}^{\theta}v\big\|_{\infty},
\]
where $\alpha,\beta,\theta,\omega\in\mathbb{C}$ such that $\Real (\alpha)>0$, $\Real (\beta)\geqslant0$, and $m-1\leqslant\Real \,\beta<m$. This function space is the same as the one used in \cite{analitical}, defined according to continuity of the classical Caputo derivative, but the norm is different, adapted for the Prabhakar setting. Note that the assumptions for this function space are enough to guarantee existence and continuity of the Caputo-type Prabhakar derivative:
\[
\prescript{C}{a}{\mathbb{D}}^{\theta}_{\alpha,\beta,\omega}v\in C[a,b]\quad\text{ for all }\;v\in C^{\beta}[a,b],
\]
because the series formula \eqref{PC:series} shows that $\prescript{C}{a}{\mathbb{D}}^{\theta}_{\alpha,\beta,\omega}v(t)$ is a uniformly convergent sum of the Caputo derivative $\prescript{C}{a}D^{\beta}v$ and various fractional integrals of it, which must all be continuous for $v\in C^{\beta}[a,b]$, since the fractional integral of a continuous function is continuous \cite{bonilla-trujillo-rivero}.
%\[C^{n_0-1,\beta_0,\theta_0}[0,T]:=\{v(t)\in C^{n_0-1}[0,T],\, ^{MC}\mathbb{D}_{\alpha,\beta_0,\omega,0+}^{\theta_0}v(t)\in C[0,T]\}\]

\begin{lem}\label{inversepPrabFDE}
If $\alpha,\beta,\theta,\omega\in\mathbb{C}$ with $\Real (\alpha)>0$ and $\Real (\beta)>0$ and $m=\lfloor\Real \beta\rfloor+1$, then for any $f\in C^{\beta,m-1}[a,b]$, we have
\[
\left(\prescript{}{a}{\mathbb{I}}_{\alpha,\beta,\omega}^{\theta}\circ\prescript{C}{a}{\mathbb{D}}_{\alpha,\beta,\omega}^{\theta}f\right)(t)=f(t)-\sum_{j=0}^{m-1}\frac{f^{(j)}(a)}{j!}\big(t-a\big)^j.
\]
In particular, if $0<\beta<1$ so that $m=1$, we have
\[
\left(\prescript{}{a}{\mathbb{I}}_{\alpha,\beta,\omega}^{\theta}\circ\prescript{C}{a}{\mathbb{D}}_{\alpha,\beta,\omega}^{\theta}f\right)(t)=f(t)-f(a).
\]
\end{lem}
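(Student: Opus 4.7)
The plan is to peel off the Taylor polynomial and reduce the problem to a pure Riemann--Liouville computation. Setting
\[
g(t):=f(t)-\sum_{j=0}^{m-1}\frac{f^{(j)}(a)}{j!}(t-a)^{j},
\]
one has $g^{(j)}(a)=0$ for $j=0,1,\ldots,m-1$, and the alternative formula \eqref{alternativePrabh} immediately gives $\prescript{C}{a}{\mathbb{D}}_{\alpha,\beta,\omega}^{\theta}f=\prescript{RL}{a}{\mathbb{D}}_{\alpha,\beta,\omega}^{\theta}g$. The identity therefore reduces to proving the ``right-inverse'' version $\prescript{}{a}{\mathbb{I}}_{\alpha,\beta,\omega}^{\theta}\circ\prescript{RL}{a}{\mathbb{D}}_{\alpha,\beta,\omega}^{\theta}g(t)=g(t)$ on the class of functions vanishing to order $m-1$ at $a$.

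Next I would expand both Prabhakar operators via the series formulae \eqref{PI:series} and \eqref{PR:series}. After swapping the summations with the bounded integral operator (justified by uniform convergence of the series on $[a,b]$ combined with the $L^1$-boundedness of the Prabhakar integral recalled from \cite{generalizedfc}), the composition rewrites as the double series
\[
\prescript{}{a}{\mathbb{I}}_{\alpha,\beta,\omega}^{\theta}\circ\prescript{RL}{a}{\mathbb{D}}_{\alpha,\beta,\omega}^{\theta}g(t)=\sum_{k,n\geqslant 0}\frac{(\theta)_k(-\theta)_n\,\omega^{k+n}}{k!\,n!}\,\prescript{RL}{a}I^{\alpha k+\beta}\circ\prescript{RL}{a}I^{\alpha n-\beta}g(t),
\]
and for each $(k,n)$ the classical Riemann--Liouville semigroup should collapse the inner composition to $\prescript{RL}{a}I^{\alpha(k+n)}g(t)$. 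For the finitely many indices $n$ with $\Real (\alpha n-\beta)<0$, where $\prescript{RL}{a}I^{\alpha n-\beta}$ is actually a fractional derivative, the vanishing of $g$ and its classical derivatives up to order $m-1$ at $a$ forces this RL derivative to coincide with its Caputo counterpart, and the semigroup continues to hold.

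Regrouping by $N=k+n$ yields
\[
\sum_{N=0}^{\infty}\frac{\omega^N}{N!}\,\prescript{RL}{a}I^{\alpha N}g(t)\sum_{k=0}^{N}\binom{N}{k}(\theta)_k(-\theta)_{N-k},
\]
and the Chu--Vandermonde convolution identity evaluates the inner sum as $(\theta+(-\theta))_N=(0)_N$, which equals $1$ for $N=0$ and $0$ for $N\geqslant 1$. Only the $N=0$ term therefore survives, giving $\prescript{RL}{a}I^{0}g(t)=g(t)$ as required; the stated $m=1$ case is immediate since the Taylor polynomial then collapses to $f(a)$. The main obstacle is precisely the semigroup step at the finitely many indices with $\Real(\alpha n-\beta)<0$: one is composing a genuine fractional integral with a genuine fractional derivative, and the argument there rests essentially on the vanishing of $g^{(j)}(a)$ for $j<m$, which is what keeps us inside the regime where the Riemann--Liouville fractional calculus behaves as a semigroup.
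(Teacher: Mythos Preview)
Your argument is correct, but it is considerably more elaborate than what the paper does. The paper's proof is essentially one line: use the \emph{first} definition \eqref{DPrab} of the Caputo--Prabhakar derivative, namely $\prescript{C}{a}{\mathbb{D}}_{\alpha,\beta,\omega}^{\theta}f=\prescript{}{a}{\mathbb{I}}_{\alpha,m-\beta,\omega}^{-\theta}\big(f^{(m)}\big)$, then apply the Prabhakar semigroup \eqref{PI:semi} to the composition $\prescript{}{a}{\mathbb{I}}_{\alpha,\beta,\omega}^{\theta}\circ\prescript{}{a}{\mathbb{I}}_{\alpha,m-\beta,\omega}^{-\theta}=\prescript{}{a}{\mathbb{I}}_{\alpha,m,\omega}^{0}=\prescript{RL}{a}I^{m}$, and finish with the classical Taylor formula with integral remainder $\prescript{RL}{a}I^{m}f^{(m)}(t)=f(t)-\sum_{j=0}^{m-1}\frac{f^{(j)}(a)}{j!}(t-a)^{j}$. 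By contrast you start from the alternative formula \eqref{alternativePrabh}, pass to the Riemann--Liouville-type Prabhakar derivative of $g$, expand both operators term by term via \eqref{PI:series}--\eqref{PR:series}, and then collapse the double series with Chu--Vandermonde. That last step is really a series-level reproof of the semigroup identity \eqref{PI:semi}, and the ``obstacle'' you flag (composing a genuine integral with a genuine derivative at the finitely many indices with $\Real(\alpha n-\beta)<0$) is entirely a by-product of having descended to the Riemann--Liouville series; the paper sidesteps it completely because at the Prabhakar level both factors $\prescript{}{a}{\mathbb{I}}_{\alpha,\beta,\omega}^{\theta}$ and $\prescript{}{a}{\mathbb{I}}_{\alpha,m-\beta,\omega}^{-\theta}$ have $\beta$-parameters with positive real part, so \eqref{PI:semi} applies directly. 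Your route does have the minor advantage of making transparent \emph{why} the Prabhakar parameters $\theta$ cancel (via $(\theta)_k(-\theta)_{N-k}$ summing to $(0)_N$), but for the purposes of this lemma the paper's argument is both shorter and avoids the delicate semigroup justification you had to supply.
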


\begin{proof}
This follows from the first definition \eqref{DPrab} of the Caputo-type derivative, together with the semigroup property \eqref{PI:semi} of Prabhakar integrals and the fundamental theorem of calculus.
%From the definition of the regularized Prabhakar derivative and \cite[Theorem 8]{generalizedfc} follow that
%\begin{align*}
%\mathbb{I}_{\alpha,\beta,\omega;a+}^{\theta}\big(\,^{C}\mathbb{D}_{\alpha,\beta,\omega;a+}^{\theta}f\big)(t)&=\mathbb{I}_{\alpha,\beta,\omega;a+}^{\theta}\mathbb{I}_{\alpha,m-\beta,\omega;a+}^{-\theta}\frac{d^m}{dt^m}f(t)=\,^{RL}I_{a+}^{m}f^{(m)}(t),
%\end{align*}
%\textcolor{red}{COMPLETAR. Note that the double sum in the representation is going to zero by Lemma \ref{importantproPrabFDE}(item 2).}
\end{proof}

\subsection{Fractional calculus with respect to functions}

In order to make an extension of Prabhakar fractional calculus, let us now introduce the concept of fractional integrals and derivatives of a function with respect to another function.

In the classical Riemann--Liouville sense, the fractional integral of a function $f(t)$ with respect to a monotonically increasing $C^1$ function $\psi:[a,b]\to\mathbb{R}$ with $\psi'>0$ everywhere is defined \cite{osler} by
\[
\prescript{RL}{a}I^{\beta}_{\psi(t)}f(t)=\frac1{\Gamma(\beta)}\int_a^t \big(\psi(t)-\psi(s)\big)^{\beta-1}f(s)\psi'(s)\,\mathrm{d}s,\quad\Real (\beta)>0.
\]
This operator was first introduced by Osler \cite{osler}, and more detailed studies of both this fractional integral and its associated fractional derivatives can be found in \cite[\S2.5]{kilbas} and \cite[\S18.2]{samko}. One of its most important properties is its conjugation relation with the original Riemann--Liouville integral \eqref{fraci}:
\begin{equation}
\label{conjugation}
\prescript{RL}{a}I^{\beta}_{\psi(t)}=Q_\psi\circ\prescript{RL}{\psi(a)}I^{\beta}\circ Q_\psi^{-1},\quad\text{ where }Q_\psi:f\mapsto f\circ \psi.
\end{equation}
This enables many properties of the fractional integral with respect to $\psi$, such as composition relations, to be proved immediately from the corresponding properties of the Riemann--Liouville fractional integral. Conjugation relations like \eqref{conjugation} are also valid for the Riemann--Liouville and Caputo derivatives with respect to functions, and these relations can be used for efficient treatment of fractional differential equations with respect to functions \cite{fahad-rehman-fernandez,zaky-hendy-suragan}. The same idea of conjugation relations has also been applied to other types of fractional calculus \cite{agrawal,fahad-fernandez-rehman-siddiqi}, and more general fractional integral and derivative operators have also been taken with respect to functions \cite{oumarou-fahad-djida-fernandez}, illustrating the scope of this idea's applicability.

The Prabhakar fractional integral and derivatives of a function with respect to another function were first defined in \cite{fb:ssrn} and studied in more detail in \cite{oliveira1,oliveira2}:
\begin{align}
\prescript{}{a}{\mathbb{I}}_{\alpha,\beta,\omega}^{\theta;\psi(t)}f(t)&=\int_a^t \big(\psi(t)-\psi(s))^{\beta-1}E^{\theta}_{\alpha,\beta}\left(\omega\big(\psi(t)-\psi(s)\big)^{\alpha}\right)f(s)\psi'(s)\,\mathrm{d}s, \label{Pwrtf:int} \\
\prescript{RL}{a}{\mathbb{D}}_{\alpha,\beta,\omega}^{\theta;\psi(t)}f(t)&=\left(\frac{1}{\psi'(t)}\cdot\frac{\mathrm{d}}{\mathrm{d}t}\right)^m\left(\prescript{}{a}{\mathbb{I}}_{\alpha,m-\beta,\omega}^{-\theta;\psi(t)}f(t)\right), \label{Pwrtf:Rder} \\
\prescript{C}{a}{\mathbb{D}}_{\alpha,\beta,\omega}^{\theta;\psi(t)}f(t)&=\prescript{}{a}{\mathbb{I}}_{\alpha,m-\beta,\omega}^{-\theta;\psi(t)}\left(\left(\frac{1}{\psi'(t)}\cdot\frac{\mathrm{d}}{\mathrm{d}t}\right)^mf(t)\right), \label{Pwrtf:Cder}
\end{align}
where $\Real\alpha>0$ in every case, $\Real\beta>0$ in \eqref{Pwrtf:int}, and $\Real\beta\geqslant0$ with $m=\lfloor\Real\beta\rfloor+1$ in \eqref{Pwrtf:Rder}--\eqref{Pwrtf:Cder}.

Various properties of these operators were proved in \cite{oliveira1,oliveira2}, but those studies did not take account of the conjugation relation connecting these operators back to the original Prabhakar operators. We note that Prabhakar fractional calculus is a special case of fractional calculus with general analytic kernels \cite{fernandez-ozarslan-baleanu}, which has been extended to a version taken with respect to functions \cite{oumarou-fahad-djida-fernandez}, where a conjugation relation analogous to \eqref{conjugation} has been proved. Therefore, the corresponding relation holds for Prabhakar fractional integrals as a special case:
\begin{align*}
\prescript{}{a}{\mathbb{I}}_{\alpha,\beta,\omega}^{\theta;\psi(t)}&=Q_\psi\circ\prescript{}{\psi(a)}{\mathbb{I}}_{\alpha,\beta,\omega}^{\theta}\circ Q_\psi^{-1}, \\
\prescript{RL}{a}{\mathbb{D}}_{\alpha,\beta,\omega}^{\theta;\psi(t)}&=Q_\psi\circ\prescript{RL}{\psi(a)}{\mathbb{D}}_{\alpha,\beta,\omega}^{\theta}\circ Q_\psi^{-1}, \\
\prescript{C}{a}{\mathbb{D}}_{\alpha,\beta,\omega}^{\theta;\psi(t)}&=Q_\psi\circ\prescript{C}{\psi(a)}{\mathbb{D}}_{\alpha,\beta,\omega}^{\theta}\circ Q_\psi^{-1},\end{align*}
where the functional operator $Q_\psi$ is defined in \eqref{conjugation}. From these conjugation relations, all properties proved above for Prabhakar operators immediately give rise to corresponding properties for Prabhakar operators with respect to functions. For example, \eqref{alternativePrabh} implies that
\begin{equation*}
\prescript{C}{a}{\mathbb{D}}_{\alpha,\beta,\omega}^{\theta;\psi(t)}f(t)=\prescript{RL}{a}{\mathbb{D}}_{\alpha,\beta,\omega}^{\theta;\psi(t)}\left[f(t)-\sum_{j=0}^{m-1}\frac{\big(\psi(t)-\psi(a)\big)^j}{j!}\lim_{t\to a+}\left(\frac{1}{\psi'(t)}\cdot\frac{\mathrm{d}}{\mathrm{d}t}\right)^jf(t)\right],
\end{equation*}
with $\alpha,\beta,m$ as before. Or again, Lemma \ref{importantproPrabFDE} implies that
\[
\prescript{C}{a}{\mathbb{D}}_{\alpha,\beta',\omega}^{\theta',\psi(t)}\circ\prescript{}{a}{\mathbb{I}}_{\alpha,\beta,\omega}^{\theta,\psi(t)}f(t)=\prescript{}{a}{\mathbb{I}}_{\alpha,\beta-\beta',\omega}^{\theta-\theta',\psi(t)}f(t)
\]
where $\Real\alpha>0$ and $\Real\beta>\Real\beta'\geqslant0$ and $\theta,\theta'\in\mathbb{C}$, while Lemma \ref{inversepPrabFDE} implies that
\[
\left(\prescript{}{a}{\mathbb{I}}_{\alpha,\beta,\omega}^{\theta;\psi(t)}\circ\prescript{C}{a}{\mathbb{D}}_{\alpha,\beta,\omega}^{\theta;\psi(t)}f\right)(t)=f(t)-\sum_{j=0}^{m-1}\frac{\big(\psi(t)-\psi(a)\big)^j}{j!}\lim_{t\to a+}\left(\frac{1}{\psi'(t)}\cdot\frac{\mathrm{d}}{\mathrm{d}t}\right)^jf(t),
\]
with $\alpha,\beta,m$ as before and $f$ in the function space 
\[
C^{\beta,m-1}_{\psi(t)}[a,b]:=\left\{v\in C^{m-1}[a,b]\;:\; \prescript{C}{a}D^{\beta}_{\psi(t)}v(t)\text{ exists in }C[a,b]\right\},
\]
endowed with the norm
\[
\|v\|_{C^{\beta,m-1}_\psi}=\sum_{k=0}^{m-1}\left\|\left(\frac{1}{\psi'(t)}\cdot\frac{\mathrm{d}}{\mathrm{d}t}\right)^kv(t)\right\|_{\infty}+\big\|\prescript{C}{a}{\mathbb{D}}_{\alpha,\beta,\omega}^{\theta;\psi(t)}v(t)\big\|_{\infty}.
\]
It can be proved %, similarly to the methodology of \cite{fernandez-restrepo-djida},
that the functional operator $Q_\psi$ is a natural isometry from the normed space $C^{\beta,m-1}[a,b]$ to the normed space $C^{\beta,m-1}_{\psi(t)}[a,b]$.

\section{Main results}\label{mainPrabFDE}

We will study the following differential equation with continuous variable coefficients and Caputo--Prabhakar fractional derivatives: 
\begin{equation}\label{eq1PrabFDE}
\prescript{C}{0}{\mathbb{D}}_{\alpha,\beta_0,\omega}^{\theta_0}v(t)+\sum_{i=1}^{m}\sigma_i(t)\prescript{C}{0}{\mathbb{D}}_{\alpha,\beta_i,\omega}^{\theta_i}v(t)=g(t),\quad t\in[0,T],
\end{equation}
to be solved for the unknown function $v(t)$, under the initial conditions 
\begin{equation}\label{eq2PrabFDE}
\frac{\mathrm{d}^k}{\mathrm{d}t^k} v(t)\Big|_{t=0+}=v^{(k)}(0)=e_k\in\mathbb{C},\quad k=0,1,\ldots,n_0-1, 
\end{equation}
where $\alpha,\beta_i,\theta_i,\omega\in\mathbb{C}$ with $\Real (\alpha)>0$ and $\Real (\beta_0)>\Real (\beta_1)>\cdots>\Real (\beta_{m})\geqslant0$ 

and $n_i=\lfloor \Real \beta_i\rfloor+1\in\mathbb{N}$ and the functions $\sigma_i,g\in C[0,T]$ for $i=0,1,\ldots,m$. 

We will also study the homogeneous case  
\begin{equation}\label{eq3PrabFDE}
\prescript{C}{0}{\mathbb{D}}_{\alpha,\beta_0,\omega}^{\theta_0}v(t)+\sum_{i=1}^{m}\sigma_i(t)\prescript{C}{0}{\mathbb{D}}_{\alpha,\beta_i,\omega}^{\theta_i}v(t)=0,\quad t\in[0,T],
\end{equation}
and the homogeneous initial conditions 
\begin{equation}\label{eq4PrabFDE}
v^{(k)}(0)=0,\quad k=0,1,\ldots,n_0-1,
\end{equation}
in order to obtain complementary functions which can then be used to construct the general solution.

\begin{defn}
A set of functions $v_j(t)$, $j=0,1,\ldots,n_0-1$, is called a canonical set of solutions of the homogeneous equation \eqref{eq3PrabFDE} if every function $v_j$ satisfies \eqref{eq3PrabFDE} and the following initial conditions hold for $j,k=0,1,\ldots,n_0-1$:
\begin{equation}
\label{initcond:canonical}
v_j^{(k)}(0)=
\begin{cases}
1,&\quad j=k,\\
0,&\quad j\neq k.
\end{cases}
\end{equation}
\end{defn}

We now study the existence, uniqueness, and representation of solutions for the above initial value problem.

\subsection{The general FDE with homogeneous initial conditions}
We start by proving the existence and uniqueness of solutions for the general FDE \eqref{eq1PrabFDE} with homogeneous initial conditions \eqref{eq4PrabFDE}.
 
\begin{thm}\label{lem3.1PrabFDE}
Let $\alpha,\beta_i,\theta_i,\omega\in\mathbb{C}$ with $\Real (\alpha)>0$ and $\Real (\beta_0)>\Real (\beta_1)>\cdots>\Real (\beta_{m})\geqslant0$ and $\Real (\beta_0)\not\in\mathbb{Z}$, and let $n_i=\lfloor \Real \beta_i\rfloor+1\in\mathbb{N}$ and the functions $\sigma_i,g\in C[0,T]$ for $i=0,1,\ldots,m$. Then the FDE \eqref{eq1PrabFDE} under the conditions \eqref{eq4PrabFDE} has a unique solution $v\in C^{\beta_0,n_0-1}[0,T]$, and it is represented by the following uniformly convergent series:
\begin{equation}\label{for27}
v(t)=\sum_{k=0}^{\infty}(-1)^k \prescript{}{0}{\mathbb{I}}_{\alpha,\beta_0,\omega}^{\theta_0}\left(\sum_{i=1}^{m}\sigma_i(t)\prescript{}{0}{\mathbb{I}}_{\alpha,\beta_0-\beta_i,\omega}^{\theta_0-\theta_i}\right)^{k}g(t).
\end{equation}
\end{thm}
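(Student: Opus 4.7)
My plan is to convert the fractional differential equation into an equivalent Volterra-type integral equation, then solve it by a Neumann series (Picard iteration) whose convergence is controlled by estimates on nested Prabhakar integrals.

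First, I would introduce the auxiliary unknown $u(t) := \prescript{C}{0}{\mathbb{D}}_{\alpha,\beta_0,\omega}^{\theta_0}v(t)$. Because the initial conditions \eqref{eq4PrabFDE} are homogeneous, Lemma \ref{inversepPrabFDE} gives $v(t)=\prescript{}{0}{\mathbb{I}}_{\alpha,\beta_0,\omega}^{\theta_0}u(t)$; and since $\Real(\beta_0)>\Real(\beta_i)\geqslant 0$ for every $i\geqslant 1$, Lemma \ref{importantproPrabFDE}(3) yields
\[
\prescript{C}{0}{\mathbb{D}}_{\alpha,\beta_i,\omega}^{\theta_i}v(t)=\prescript{C}{0}{\mathbb{D}}_{\alpha,\beta_i,\omega}^{\theta_i}\circ\prescript{}{0}{\mathbb{I}}_{\alpha,\beta_0,\omega}^{\theta_0}u(t)=\prescript{}{0}{\mathbb{I}}_{\alpha,\beta_0-\beta_i,\omega}^{\theta_0-\theta_i}u(t).
\]
Substituting back, the FDE \eqref{eq1PrabFDE} is equivalent to the integral equation $u+Tu=g$ on $C[0,T]$, where
\[
(Tu)(t):=\sum_{i=1}^{m}\sigma_i(t)\,\prescript{}{0}{\mathbb{I}}_{\alpha,\beta_0-\beta_i,\omega}^{\theta_0-\theta_i}u(t).
\]
The reverse implication (integral equation $\Rightarrow$ FDE plus initial conditions) will follow by applying $\prescript{C}{0}{\mathbb{D}}_{\alpha,\beta_0,\omega}^{\theta_0}$ to $v=\prescript{}{0}{\mathbb{I}}_{\alpha,\beta_0,\omega}^{\theta_0}u$ using Lemma \ref{importantproPrabFDE}, together with the vanishing-at-$a$ property of Prabhakar integrals of continuous functions.

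Next, I would formally invert to get $u=\sum_{k=0}^{\infty}(-1)^k T^k g$, which after applying $\prescript{}{0}{\mathbb{I}}_{\alpha,\beta_0,\omega}^{\theta_0}$ yields exactly the series \eqref{for27}. The main technical work is to show this series converges uniformly on $[0,T]$. For this I would use the pointwise bound
\[
\left|\prescript{}{0}{\mathbb{I}}_{\alpha,\gamma,\omega}^{\mu}f(t)\right|\leqslant M_{\gamma,\mu}\,\prescript{RL}{0}I^{\Real\gamma}|f|(t),
\]
where $M_{\gamma,\mu}=\sup_{0\leqslant s\leqslant T}|E^{\mu}_{\alpha,\gamma}(\omega s^{\alpha})|<\infty$ by the convergence analysis recalled in Section \ref{preliPrabFDE}. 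Writing $\gamma_i=\Real(\beta_0-\beta_i)>0$, $\gamma_{\min}=\gamma_1=\min_i\gamma_i$, $\|\sigma\|=\max_i\|\sigma_i\|_\infty$, and iterating $k$ times with the pointwise monotonicity of Riemann--Liouville integrals, each of the $m^k$ products that make up $T^k g$ is dominated by a constant multiple of $\|g\|_\infty\,t^{k\gamma_{\min}}/\Gamma(k\gamma_{\min}+1)$. Hence
\[
\|T^k g\|_\infty\leqslant \|g\|_\infty\,\frac{(C\,T^{\gamma_{\min}})^{k}}{\Gamma(k\gamma_{\min}+1)}
\]
for some constant $C$ depending only on $m,\|\sigma\|,M_{\gamma_i,\theta_0-\theta_i}$. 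The super-exponential decay furnished by the Gamma function forces both uniform convergence of $\sum(-1)^k T^k g$ in $C[0,T]$ and uniform convergence of \eqref{for27} after one more application of $\prescript{}{0}{\mathbb{I}}_{\alpha,\beta_0,\omega}^{\theta_0}$.

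Having solved the integral equation, I would verify that $v=\prescript{}{0}{\mathbb{I}}_{\alpha,\beta_0,\omega}^{\theta_0}u$ belongs to $C^{\beta_0,n_0-1}[0,T]$: classical differentiability up to order $n_0-1$ follows from the series formula \eqref{PI:series} because $\Real\beta_0>n_0-1$ ensures $\Real(\alpha n+\beta_0-k)>0$ for each $k\leqslant n_0-1$, while continuity of $\prescript{C}{0}{\mathbb{D}}_{\alpha,\beta_0,\omega}^{\theta_0}v=u$ is built in by construction, and the classical Caputo derivative $\prescript{C}{0}D^{\beta_0}v$ is continuous because the assumption $\Real(\beta_0)\notin\mathbb{Z}$ keeps $\alpha n+n_0-\beta_0$ off the non-positive integers, so every term of \eqref{PC:series} is a continuous fractional integral. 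Lemma \ref{importantproPrabFDE}(3) also confirms that the initial conditions \eqref{eq4PrabFDE} are recovered from $v=\prescript{}{0}{\mathbb{I}}_{\alpha,\beta_0,\omega}^{\theta_0}u$. Uniqueness follows either from the same contraction estimate applied to a difference of two solutions, or equivalently from uniqueness of fixed points of $I+T$ via the Neumann expansion.

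I expect the main obstacle to be the uniform-convergence bound for the nested iterates $T^k g$, because the variable coefficients $\sigma_i(t)$ sit between the fractional integrals and block any direct use of the semigroup law \eqref{PI:semi}; one must work purely with pointwise estimates and rely on the fact that each of the $m^k$ terms after $k$ iterations involves an integral of total real order at least $k\gamma_{\min}$, which is precisely what makes the Gamma-function denominator large enough to beat the $m^k\|\sigma\|^k$ growth in the numerator.
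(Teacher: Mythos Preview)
Your proposal is correct and follows the same overall architecture as the paper: reduce the FDE with homogeneous data to the integral equation $u+Tu=g$ via the substitution $u=\prescript{C}{0}{\mathbb{D}}_{\alpha,\beta_0,\omega}^{\theta_0}v$, solve for $u$, recover $v=\prescript{}{0}{\mathbb{I}}_{\alpha,\beta_0,\omega}^{\theta_0}u$, and check regularity and initial conditions. The one genuine difference is the convergence/uniqueness step. The paper does \emph{not} estimate the iterates $T^k$: instead it equips $C[0,T]$ with the Bielecki-type norm $\|z\|_p=\max_{t}e^{-pt}|z(t)|$, uses the elementary bound $|\prescript{RL}{0}I^{\lambda}e^{pt}|\leqslant\Gamma(\Real\lambda)|\Gamma(\lambda)|^{-1}p^{-\Real\lambda}e^{pt}$ together with the series formula \eqref{PI:series} to show that $u\mapsto g-Tu$ is a one-step contraction for $p$ large, and then invokes the Banach fixed point theorem; the Picard iterates are afterwards identified with the partial sums of \eqref{for27}. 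Your route instead bounds $\|T^k g\|_\infty$ directly by a Mittag-Leffler-type majorant $C^k/\Gamma(k\gamma_{\min}+1)$ and sums the Neumann series in the ordinary sup norm. Both arguments are standard for Volterra operators with weakly singular kernels; the weighted norm buys a cleaner one-line contraction, whereas your iterate estimate is more elementary and yields an explicit rate of convergence for the partial sums. One small caveat: your intermediate claim that each of the $m^k$ terms is dominated by $\|g\|_\infty\, t^{k\gamma_{\min}}/\Gamma(k\gamma_{\min}+1)$ is not literally true when $t>1$ or when $k\gamma_{\min}$ is small (the total real order is $\sum_j\gamma_{i_j}\geqslant k\gamma_{\min}$, and $\Gamma$ is not globally monotone), but the final inequality $\|T^k g\|_\infty\leqslant \|g\|_\infty\, C^k/\Gamma(k\gamma_{\min}+1)$ survives once you bound $t^{\sum\gamma_{i_j}}\leqslant\max(1,T)^{k\gamma_{\max}}$ and absorb the finitely many small-$k$ terms into the constant.
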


\begin{proof}
Our proof will be in four parts: first transforming the FDE \eqref{eq1PrabFDE} with the conditions \eqref{eq4PrabFDE} into an equivalent integral equation, much easier to handle and work with; then using the Banach fixed point theorem to show that this integral equation has a unique solution in an appropriate function space; then constructing an appropriately convergent sequence of functions to give the unique solution function as a limit; and finally constructing an explicit formula for the solution function as an infinite series.

\medskip \textbf{Equivalent integral equation.} Let us take $v\in C^{\beta_0,n_0-1}[0,T]$ satisfying \eqref{eq1PrabFDE} and \eqref{eq4PrabFDE}. For $u(t)=\prescript{C}{0}{\mathbb{D}}_{\alpha,\beta_0,\omega}^{\theta_0}v(t)$, we know that $u\in C[0,T]$ by definition of the function space $C^{\beta_0,n_0-1}[0,T]$. By Lemma \ref{inversepPrabFDE} and conditions \eqref{eq4PrabFDE}, it follows that
%\begin{equation}\label{va1}
\[
\prescript{}{0}{\mathbb{I}}_{\alpha,\beta_0,\omega}^{\theta_0}u(t)=\prescript{}{0}{\mathbb{I}}_{\alpha,\beta_0,\omega}^{\theta_0}\prescript{C}{0}{\mathbb{D}}_{\alpha,\beta_0,\omega}^{\theta_0}v(t)=v(t).\]
%\end{equation}
Due to $u\in C[0,T]$, $\Real (\beta_0)>\Real (\beta_{\it i})\geqslant0$, and Lemma \ref{importantproPrabFDE}, we have
\[\prescript{C}{0}{\mathbb{D}}_{\alpha,\beta_i,\omega}^{\theta_i}v(t)=\prescript{C}{0}{\mathbb{D}}_{\alpha,\beta_i,\omega}^{\theta_i}\prescript{}{0}{\mathbb{I}}_{\alpha,\beta_0,\omega}^{\theta_0}u(t)=\prescript{}{0}{\mathbb{I}}_{\alpha,\beta_0-\beta_i,\omega}^{\theta_0-\theta_i}u(t),\quad i=1,\ldots,m.\]
%If $\Real (\beta_m)=0$, then, by assumption, $\beta_m=0$ and the equality above follows immediately.\footnote{Why do we need to deal with this case separately?}
Therefore, equation \eqref{eq1PrabFDE} becomes
\begin{equation}\label{integraleqPrabFDE}
u(t)+\sum_{i=1}^{m}\sigma_i(t)\prescript{}{0}{\mathbb{I}}_{\alpha,\beta_0-\beta_i,\omega}^{\theta_0-\theta_i}u(t)=g(t).
\end{equation}
Thus, if $v\in C^{\beta_0,n_0-1}[0,T]$ is a solution of the initial value problem \eqref{eq1PrabFDE} and \eqref{eq4PrabFDE}, then $u=\prescript{C}{0}{\mathbb{D}}_{\alpha,\beta_0,\omega}^{\theta_0}v\in C[0,T]$ is a solution of the integral equation \eqref{integraleqPrabFDE}.

We now focus on the converse statement. Let $u\in C[0,T]$ be a solution of \eqref{integraleqPrabFDE}. By the application of the operator $\prescript{}{0}{\mathbb{I}}_{\alpha,\beta_0,\omega}^{\theta_0}$ to equation \eqref{integraleqPrabFDE}, we get
\begin{equation}
\label{equiv:step}
\prescript{}{0}{\mathbb{I}}_{\alpha,\beta_0,\omega}^{\theta_0}u(t)+\prescript{}{0}{\mathbb{I}}_{\alpha,\beta_0,\omega}^{\theta_0}\sum_{i=1}^{m}\sigma_i(t)\prescript{}{0}{\mathbb{I}}_{\alpha,\beta_0-\beta_i,\omega}^{\theta_0-\theta_i}u(t)=\prescript{}{0}{\mathbb{I}}_{\alpha,\beta_0,\omega}^{\theta_0}g(t).
\end{equation}
Defining $v(t)=\prescript{}{0}{\mathbb{I}}_{\alpha,\beta_0,\omega}^{\theta_0}u(t)$, from Lemma \ref{importantproPrabFDE} we obtain 
\[
\prescript{C}{0}{\mathbb{D}}_{\alpha,\beta_i,\omega}^{\theta_i}v(t)=\prescript{}{0}{\mathbb{I}}_{\alpha,\beta_0-\beta_i,\omega}^{\theta_0-\theta_i}u(t)\quad\text{and}\quad\prescript{}{0}{\mathbb{I}}_{\alpha,\beta_0-\beta_i,\omega}^{\theta_0-\theta_i}u\in C[0,T],
\]
therefore \eqref{equiv:step} implies
\[
v(t)+\prescript{}{0}{\mathbb{I}}_{\alpha,\beta_0,\omega}^{\theta_0}\sum_{i=1}^{m}\sigma_i(t)\prescript{C}{0}{\mathbb{D}}_{\alpha,\beta_i,\omega}^{\theta_i}v(t)=\prescript{}{0}{\mathbb{I}}_{\alpha,\beta_0,\omega}^{\theta_0}g(t).\]
Then, applying the Caputo--Prabhakar derivative:
\begin{equation*}
\prescript{C}{0}{\mathbb{D}}_{\alpha,\beta_0,\omega}^{\theta_0}v(t)+\prescript{C}{0}{\mathbb{D}}_{\alpha,\beta_0,\omega}^{\theta_0}\prescript{}{0}{\mathbb{I}}_{\alpha,\beta_0,\omega}^{\theta_0}\sum_{i=1}^{m}\sigma_i(t)\prescript{C}{0}{\mathbb{D}}_{\alpha,\beta_i,\omega}^{\theta_i}v(t)=\prescript{C}{0}{\mathbb{D}}_{\alpha,\beta_0,\omega}^{\theta_0}\prescript{}{0}{\mathbb{I}}_{\alpha,\beta_0,\omega}^{\theta_0}g(t).
\end{equation*}
By Lemma \ref{importantproPrabFDE}, we arrive at
\[
\prescript{C}{0}{\mathbb{D}}_{\alpha,\beta_0,\omega}^{\theta_0}v(t)+\sum_{i=1}^{m}\sigma_i(t)\prescript{C}{0}{\mathbb{D}}_{\alpha,\beta_i,\omega}^{\theta_i}v(t)=g(t),
\]
which is exactly \eqref{eq1PrabFDE}.

Moreover, by \cite[Theorem 7]{generalizedfc}, Lemma \ref{importantproPrabFDE}, and $\Real(\beta_0)\not\in\mathbb{Z}$ so that $\Real (\beta_0)>n_0-1$, we have
\[\frac{\mathrm{d}^k}{\mathrm{d}t^k} v(t)\Big|_{t=0+}=\frac{\mathrm{d}^k}{\mathrm{d}t^k} \prescript{}{0}{\mathbb{I}}_{\alpha,\beta_0,\omega}^{\theta_0}u(t)\Big|_{t=0+}=\prescript{}{0}{\mathbb{I}}_{\alpha,\beta_0-k,\omega}^{\theta_0}u(t)|_{t=0+}=0,\]
for any $k=0,1,\ldots,n_0-1$, giving the required initial conditions \eqref{eq4PrabFDE}, and we also have the required regularity (function space) since $\prescript{C}{0}{\mathbb{D}}_{\alpha,\beta_0,\omega}^{\theta_0}v=\prescript{C}{0}{\mathbb{D}}_{\alpha,\beta_0,\omega}^{\theta_0}\prescript{}{0}{\mathbb{I}}_{\alpha,\beta_0,\omega}^{\theta_0}u=u\in C[0,T]$ so that $v\in C^{\beta_0,n_0-1}[0,T]$. Thus, a solution $u\in C[0,T]$ of equation \eqref{integraleqPrabFDE} provides a solution $v=\prescript{}{0}{\mathbb{I}}_{\alpha,\beta_0,\omega}^{\theta_0}u\in C^{\beta_0,n_0-1}[0,T]$ for the equation \eqref{eq1PrabFDE} under the conditions \eqref{eq4PrabFDE}.

Finally, we have proved the equivalence of \eqref{eq1PrabFDE} and \eqref{eq4PrabFDE} with \eqref{integraleqPrabFDE}, under suitable regularity (function space) conditions on both sides of the equivalence.

\medskip \textbf{Existence and uniqueness.} Consider the operator $\mathfrak{T}$ defined by 
\[\mathfrak{T}u(t):=g(t)-\sum_{i=1}^{m}\sigma_i(t)\prescript{}{0}{\mathbb{I}}_{\alpha,\beta_0-\beta_i,\omega}^{\theta_0-\theta_i}u(t).\]
The integral equation \eqref{integraleqPrabFDE} is equivalent to $\mathfrak{T}u(t)=u(t)$, and it is clear that $\mathfrak{T}:C[0,T]\to C[0,T]$. Let us consider the norm on $C[0,T]$ defined by
\[
\|z\|_{p}:=\max_{t\in[0,T]}\Big(e^{-pt}|z(t)|\Big),
\]
for some large $p\in\mathbb{R}_+$ (to be fixed later according to our needs). This norm is equivalent to the supremum norm on $C[0,T]$, therefore $C[0,T]$ is a complete metric space under this norm. For the next estimates, we need to recall the following inequality:
\begin{equation}\label{util}
\Big|\prescript{RL}{0}I^{\lambda}e^{pt}\Big|\leqslant \frac{\Gamma(\Real\lambda)}{\left|\Gamma(\lambda)\right|}\cdot\frac{e^{pt}}{p^{\Real\lambda}}, \quad t,p\in \mathbb{R}_+,\;\Real\lambda>0,
\end{equation}
which follows from a simple inequality of integrals:
\[
\left|\Gamma(\lambda)\cdot\prescript{RL}{0}I^{\lambda}e^{pt}\right|\leqslant\Gamma(\Real\lambda)\cdot\prescript{RL}{-\infty}I^{\Real\lambda}e^{pt}=\Gamma(\Real\lambda)\cdot\frac{e^{pt}}{p^{\Real\lambda}}.
\]

Now, for any fixed $t\in [0,T]$ and $u_1,u_2\in C[0,T]$ and $p\in\mathbb{R}_+$, we get 
\begin{align*}
|\mathfrak{T}&u_1(t)-\mathfrak{T}u_2(t)| \\
&\leqslant\sum_{i=1}^{m}\|\sigma_i\|_{\infty}\sum_{k=0}^{\infty}\frac{|(\theta_0-\theta_i)_k||\omega|^k}{k!}\Big|\prescript{RL}{0}I^{\alpha k+\beta_0-\beta_i}\big[u_1(t)-u_2(t)\big]\Big| \\
&\leqslant\|u_1-u_2\|_{p}\sum_{i=1}^{m}\|\sigma_i\|_{\infty}\sum_{k=0}^{\infty}\frac{|(\theta_0-\theta_i)_k||\omega|^k}{k!}\Big|\prescript{RL}{0}I^{\alpha k+\beta_0-\beta_i}\big[e^{pt}\big]\Big| \\
&\leqslant\|u_1-u_2\|_{p}\sum_{i=1}^{m}\|\sigma_i\|_{\infty}\sum_{k=0}^{\infty}\frac{|(\theta_0-\theta_i)_k||\omega|^k}{k!}\cdot\frac{\Gamma(\Real(\beta_0-\beta_i+\alpha k))}{\left|\Gamma(\beta_0-\beta_i+\alpha k)\right|}\cdot\frac{e^{pt}}{p^{\Real(\beta_0-\beta_i)+\Real(\alpha)k}} \\
&=e^{pt}\|u_1-u_2\|_{p}\sum_{i=1}^{m}\frac{\|\sigma_i\|_{\infty}}{p^{\Real(\beta_0-\beta_i)}}\sum_{k=0}^{\infty}\frac{|(\theta_0-\theta_i)_k|}{k!}\cdot\frac{\Gamma(\Real(\beta_0-\beta_i)+k\Real\alpha))}{\left|\Gamma(\beta_0-\beta_i+\alpha k)\right|}\left(\frac{|\omega|}{p^{\Real\alpha}}\right)^k \\
&\leqslant Ce^{pt}\|u_1-u_2\|_{p},
\end{align*}
where $C>0$ is a constant, independent of $u_1,u_2$ and $t$, which can be taken to satisfy $0<C<1$ if we choose $p\in\mathbb{R}_+$ sufficiently large, since the $\beta_i$ and $\theta_i$ and $\sigma_i$ and $\alpha$ are fixed.

Thus, dividing by $e^{pt}$ in this inequality and taking the supremum over $t\in[0,T]$, we find
\[
\|\mathfrak{T}u_1-\mathfrak{T}u_2\|_{p}\leqslant C\|u_1-u_2\|_{p},
\]
which means that $T$ is contractive with respect to the norm $\|\cdot\|_{p}$. Equivalently, it is contractive with respect to the supremum norm $\|\cdot\|_{\infty}$ on $C[0,T]$. By applying the Banach fixed point theorem, it follows that the equation \eqref{integraleqPrabFDE} has a unique solution $u\in C[0,T]$ and the sequence $\{u_n(t)\}_{n\geqslant0}$ defined by
\begin{equation*}
\begin{cases}
u_0(t)&=g(t), \\
u_n(t)&=\displaystyle g(t)-\sum_{i=1}^{m}\sigma_i(t)\prescript{}{0}{\mathbb{I}}_{\alpha,\beta_0-\beta_i,\omega}^{\theta_0-\theta_i}u_{n-1}(t), \quad n=1,2,\ldots,
\end{cases}
\end{equation*}
converges (with respect to $\|\cdot\|_{\infty}$) to the limit $u$ in $C[0,T]$. Therefore, by the equivalence proved above, it follows that the initial value problem \eqref{eq1PrabFDE} and \eqref{eq4PrabFDE} has a unique solution $v\in C^{\beta_0,n_0-1}[0,T]$.

\medskip \textbf{Solution as a limit.} We already know that the sequence $\{u_n(t)\}_{n\geqslant0}$ converges in $C[0,T]$ with respect to $\|\cdot\|_{\infty}$. Since the Prabhakar fractional integral preserves uniform convergence, we have the following sequence also convergent with respect to $\|\cdot\|_{\infty}$:
\begin{equation*}
\begin{cases}
\prescript{}{0}{\mathbb{I}}_{\alpha,\beta_0,\omega}^{\theta_0}u_0(t)&=\displaystyle\prescript{}{0}{\mathbb{I}}_{\alpha,\beta_0,\omega}^{\theta_0}g(t), \\
\prescript{}{0}{\mathbb{I}}_{\alpha,\beta_0,\omega}^{\theta_0}u_n(t)&=\displaystyle\prescript{}{0}{\mathbb{I}}_{\alpha,\beta_0,\omega}^{\theta_0}g(t)-\prescript{}{0}{\mathbb{I}}_{\alpha,\beta_0,\omega}^{\theta_0}\sum_{i=1}^{m}\sigma_i(t)\prescript{}{0}{\mathbb{I}}_{\alpha,\beta_0-\beta_i,\omega}^{\theta_0-\theta_i}u_{n-1}(t). 
\end{cases}
\end{equation*}
Let us denote $v_{n}(t)=\prescript{}{0}{\mathbb{I}}_{\alpha,\beta_0,\omega}^{\theta_0}u_n(t)$ for all $n$. Therefore, by Lemma \ref{importantproPrabFDE} since $\Real\beta_0>\Real\beta_i\geqslant0$,
\[
\prescript{C}{0}{\mathbb{D}}_{\alpha,\beta_i,\omega}^{\theta_i}v_{n-1}(t)=\prescript{}{0}{\mathbb{I}}_{\alpha,\beta_0-\beta_i,\omega}^{\theta_0-\theta_i}u_{n-1}(t)\quad\text{ for all }\,n,
\]
and so we have the following sequence of functions $v_n$:
\begin{equation}\label{eq5eq6PrabFDE}
\begin{cases}
v_0(t)&=\displaystyle \prescript{}{0}{\mathbb{I}}_{\alpha,\beta_0,\omega}^{\theta_0}g(t), \\
v_n(t)&=\displaystyle v_0(t)-\prescript{}{0}{\mathbb{I}}_{\alpha,\beta_0,\omega}^{\theta_0}\sum_{i=1}^{m}\sigma_i(t)\prescript{C}{0}{\mathbb{D}}_{\alpha,\beta_i,\omega}^{\theta_i}v_{n-1}(t),\quad n=1,2,\ldots. 
\end{cases}
\end{equation}
Using Lemma \ref{importantproPrabFDE}, one can see that $v_n(t)\in C^{\beta_0,n_0-1}[0,T]$ for all $n$.

Now we prove the convergence of the sequence $\{v_n(t)\}_{n\geqslant0}$ in $C^{\beta_0,n_0-1}[0,T]$. Since $v_n(t)=\prescript{}{0}{\mathbb{I}}_{\alpha,\beta_0,\omega}^{\theta_0}u_n(t)$ and $\prescript{C}{0}{\mathbb{D}}_{\alpha,\beta_0,\omega}^{\theta_0}v_n(t)=u_n(t)$, and the same for $v$ and $u$, we get
\[\frac{\mathrm{d}^k}{\mathrm{d}t^k}\Big( v_n(t)-v(t)\Big)=\prescript{}{0}{\mathbb{I}}_{\alpha,\beta_0-k,\omega}^{\theta_0}\Big(u_n(t)-u(t)\Big),\quad k=0,1,\ldots,n_0-1,\]
where this is a fractional integral in each case because $\Real\beta_0\not\in\mathbb{Z}$ so $\Real(\beta_0-k)>0$ for all $k$. So we have
\[
\left\|\frac{\mathrm{d}^k}{\mathrm{d}t^k}\Big( v_n(t)-v(t)\Big)\right\|_{\infty}\leqslant \|u_n-u\|_{\infty}\int_0^T (t-s)^{\Real\beta_0-k-1}\big|E^{\theta_0}_{\alpha,\beta_0-k}(\omega(t-s)^{\alpha})\big|\,\mathrm{d}s,
\]
for $k=0,1,\ldots,n_0-1$, and of course $\left\|\prescript{C}{0}{\mathbb{D}}_{\alpha,\beta_0,\omega}^{\theta_0}(v_n-v)\right\|_{\infty}=\|u_n-u\|_{\infty}$. This gives 
\begin{align*}
\|v_n-v\|_{C^{\beta_0,n_0-1}}&=\sum_{k=0}^{n_0-1}\left\|\frac{\mathrm{d}^k}{\mathrm{d}t^k}(v_n-v)\right\|_{\infty}+\left\|\prescript{C}{0}{\mathbb{D}}_{\alpha,\beta_0,\omega}^{\theta_0}(v_n-v)\right\|_{\infty} \\
&\hspace{-1cm}\leqslant \left(1+\sum_{k=0}^{n_0-1}\int_0^T (t-s)^{\Real\beta_0-k-1}\big|E^{\theta_0}_{\alpha,\beta_0-k}(\omega(t-s)^{\alpha})\big|\,\mathrm{d}s\right)\|u_n-u\|_{\infty} \\
&\hspace{-1cm}\leqslant B\|u_n-u\|_{\infty},
\end{align*}
for some finite constant $B>0$. This implies that the sequence $\{v_n(t)\}_{n\geqslant0}$ converges in $C^{\beta_0,n_0-1}[0,T]$ with respect to $\|\cdot\|_{C^{\beta_0,n_0-1}}$, since we already know that the sequence $\{u_n(t)\}_{n\geqslant0}$ converges with respect to $\|\cdot\|_{\infty}$.

\medskip \textbf{Explicit solution function.} From \eqref{eq5eq6PrabFDE} and Lemma \ref{importantproPrabFDE}, the first approximation is given by 
\begin{align*}
v^1(t)&=\prescript{}{0}{\mathbb{I}}_{\alpha,\beta_0,\omega}^{\theta_0}g(t)-\prescript{}{0}{\mathbb{I}}_{\alpha,\beta_0,\omega}^{\theta_0}\sum_{i=1}^{m}\sigma_i(t)\prescript{C}{0}{\mathbb{D}}_{\alpha,\beta_i,\omega}^{\theta_i}\prescript{}{0}{\mathbb{I}}_{\alpha,\beta_0,\omega}^{\theta_0}g(t) \\
&=\prescript{}{0}{\mathbb{I}}_{\alpha,\beta_0,\omega}^{\theta_0}g(t)-\prescript{}{0}{\mathbb{I}}_{\alpha,\beta_0,\omega}^{\theta_0}\sum_{i=1}^{m}\sigma_i(t)\prescript{}{0}{\mathbb{I}}_{\alpha,\beta_0-\beta_i,\omega}^{\theta_0-\theta_i}g(t) \\
&=\sum_{k=0}^{1}(-1)^k \prescript{}{0}{\mathbb{I}}_{\alpha,\beta_0,\omega}^{\theta_0}\left(\sum_{i=1}^{m}\sigma_i(t)\prescript{}{0}{\mathbb{I}}_{\alpha,\beta_0-\beta_i,\omega}^{\theta_0-\theta_i}\right)^k g(t),
\end{align*}
where $v^1(t)\in C^{n_0-1,\beta_0,\theta_0}[0,T]$. Let us now suppose that for $n\in\mathbb{N}$ the $n$th approximation is given by
\begin{equation}
\label{nthapprox}
v^n(t)=\sum_{k=0}^{n}(-1)^k\prescript{}{0}{\mathbb{I}}_{\alpha,\beta_0,\omega}^{\theta_0}\left(\sum_{i=1}^{m}\sigma_i(t)\prescript{}{0}{\mathbb{I}}_{\alpha,\beta_0-\beta_i,\omega}^{\theta_0-\theta_i}\right)^k g(t),
\end{equation}
Then, using \eqref{eq5eq6PrabFDE}, the $(n+1)$th approximation is
\begin{align*}
v^{n+1}(t)&=\prescript{}{0}{\mathbb{I}}_{\alpha,\beta_0,\omega}^{\theta_0}g(t)-\prescript{}{0}{\mathbb{I}}_{\alpha,\beta_0,\omega}^{\theta_0}\sum_{i=1}^{m}\sigma_i(t)\prescript{C}{0}{\mathbb{D}}_{\alpha,\beta_i,\omega}^{\theta_i}v^{n}(t) \\
&=\prescript{}{0}{\mathbb{I}}_{\alpha,\beta_0,\omega}^{\theta_0}g(t)-\sum_{k=0}^{n}(-1)^k\prescript{}{0}{\mathbb{I}}_{\alpha,\beta_0,\omega}^{\theta_0}\sum_{i=1}^{m}\sigma_i(t) \\ &\hspace{3cm}\times\prescript{C}{0}{\mathbb{D}}_{\alpha,\beta_i,\omega}^{\theta_i}\prescript{}{0}{\mathbb{I}}_{\alpha,\beta_0,\omega}^{\theta_0}\left(\sum_{i=1}^{m}\sigma_i(t)\prescript{}{0}{\mathbb{I}}_{\alpha,\beta_0-\beta_i,\omega}^{\theta_0-\theta_i}\right)^k g(t) \\
&=\prescript{}{0}{\mathbb{I}}_{\alpha,\beta_0,\omega}^{\theta_0}g(t)+\sum_{k=0}^{n}(-1)^{k+1}\prescript{}{0}{\mathbb{I}}_{\alpha,\beta_0,\omega}^{\theta_0}\left(\sum_{i=1}^{m}\sigma_i(t)\prescript{}{0}{\mathbb{I}}_{\alpha,\beta_0-\beta_i,\omega}^{\theta_0-\theta_i}\right)^{k+1} g(t) \\
&=\sum_{k=0}^{n+1}(-1)^k\prescript{}{0}{\mathbb{I}}_{\alpha,\beta_0,\omega}^{\theta_0}\left(\sum_{i=1}^{m}\sigma_i(t)\prescript{}{0}{\mathbb{I}}_{\alpha,\beta_0-\beta_i,\omega}^{\theta_0-\theta_i}\right)^k g(t).
\end{align*}
This proves by induction that the formula \eqref{nthapprox} for $v_n$ is valid for all $n$. Therefore,
\[
v(t)=\lim_{n\to\infty}v^n (t)=\sum_{k=0}^{\infty}(-1)^k\prescript{}{0}{\mathbb{I}}_{\alpha,\beta_0,\omega}^{\theta_0}\left(\sum_{i=1}^{m}\sigma_i(t)\prescript{}{0}{\mathbb{I}}_{\alpha,\beta_0-\beta_i,\omega}^{\theta_0-\theta_i}\right)^k g(t),
\]
where the limit is taken in the norm $\|\cdot\|_{C^{\beta_0,n_0-1}}$ and therefore in particular the convergence is uniform.

\end{proof}

\subsection{Canonical set of solutions}

We now give the explicit representation for a canonical set of solutions of the homogeneous equation \eqref{eq3PrabFDE}. We will consider different cases of the fractional orders. A special collection of sets will help us to consider the possible cases:
\[\mathbb{W}_j:=\big\{i\in\{1,\dots,m\}\;:\;0\leqslant\Real (\beta_i)\leqslant j\big\},\quad j=0,1,\dots,n_0-1,\]
and we define $\varrho_j=\min(\mathbb{W}_j)$ for any $j$ such that $\mathbb{W}_j\neq\emptyset$. Thus, $\mathbb{W}_j\subseteq\mathbb{W}_{j+1}$ for all $j$, and we have $\varrho_j\leqslant i\Leftrightarrow\Real\beta_i\leqslant j$ for each $i,j$.
 
\begin{thm}\label{lem3.3PrabFDE}
Let $\alpha,\beta_i,\theta_i,\omega\in\mathbb{C}$ with $\Real (\alpha)>0$ and $\Real (\beta_0)>\Real (\beta_1)>\cdots>\Real (\beta_{m})\geqslant0$ and $\Real (\beta_0)\not\in\mathbb{Z}$, and let $n_i=\lfloor \Real \beta_i\rfloor+1\in\mathbb{N}$ and the functions $\sigma_i,g\in C[0,T]$ for $i=0,1,\ldots,m$. Then there exists a unique canonical set of solutions in $C^{\beta_0,n_0-1}[0,T]$ for the equation \eqref{eq3PrabFDE}, namely $v_j\in C^{\beta_0,n_0-1}[0,T]$ for $j=0,1,\ldots,n_0-1$ given by
\begin{equation}\label{form16}
v_j(t)=\frac{t^j}{j!}+\sum_{k=0}^{\infty} (-1)^{k+1}\prescript{}{0}{\mathbb{I}}_{\alpha,\beta_0,\omega}^{\theta_0}\left(\sum_{i=1}^{m}\sigma_i(t)\prescript{}{0}{\mathbb{I}}_{\alpha,\beta_0-\beta_i,\omega}^{\theta_0-\theta_i}\right)^{k}\Phi_j(t),
\end{equation}
where $\Phi_j$ denotes the function defined in general by
\begin{equation}
\label{form17}
\Phi_j(t)=\sum_{i=\varrho_j}^{m}\sigma_i(t)\,t^{j-\beta_i}E_{\alpha,j-\beta_i+1}^{-\theta_i}(\omega t^\alpha),
\end{equation}
and it is worth noting the following special cases.
\begin{enumerate}
\item For the cases $j>\Real\beta_1$, we have $\varrho_j=1$:
\begin{equation}\label{form17:norho}
\Phi_j(t)=\sum_{i=1}^{m}\sigma_i(t)\,t^{j-\beta_i}E_{\alpha,j-\beta_i+1}^{-\theta_i}(\omega t^\alpha)\quad\text{ for }j=n_1,n_1+1,\ldots,n_0-1.
\end{equation}

\item For the cases $j<\Real\beta_m$, we have $\mathbb{W}_j=\emptyset$ and an empty sum $\Phi_j(t)=0$:
\begin{align} \label{form17:zero}
v_j(t)=\frac{t^j}{j!},\quad&\text{ for }j=0,1,\ldots,j_0,\text{ where } \\ \nonumber j_0&\in\{0,1,\ldots,n_0-2\}\text{ satisfies }j_0<\Real(\beta_m)\leqslant j_0+1.
\end{align}

\item If $n_0=n_1$ and $\beta_{m}=0$, then \eqref{form17:norho} defines $\Phi_j$ for all $j=0,1,\ldots,n_0-1$.

\item If $\Real (\beta_i)\geqslant n_0-1$ for all $i=1,\ldots,m$, so that $n_0=n_1=\ldots=n_m$, then $\Phi_j(t)=0$ and \eqref{form17:zero} defines $v_j$ for all $j=0,1,\ldots,n_0-1$.
\end{enumerate}
\end{thm}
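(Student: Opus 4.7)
The plan is to reduce the construction of each canonical solution $v_j$ to an application of Theorem \ref{lem3.1PrabFDE}. For each $j\in\{0,1,\ldots,n_0-1\}$, I look for a solution of the form $v_j(t)=\tfrac{t^j}{j!}+w_j(t)$. Since $(t^j/j!)^{(k)}(0)=\delta_{jk}$ for $0\leqslant k\leqslant n_0-1$, the prescribed initial conditions \eqref{initcond:canonical} on $v_j$ translate into the homogeneous initial conditions $w_j^{(k)}(0)=0$ on $w_j$, which is the setting of Theorem \ref{lem3.1PrabFDE}.

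The first key step is to compute $\prescript{C}{0}{\mathbb{D}}_{\alpha,\beta_i,\omega}^{\theta_i}\bigl(t^j/j!\bigr)$ for each $i$. Using the alternative Caputo-type formula \eqref{alternativePrabh} (or equivalently the series representation \eqref{PC:series}), this derivative vanishes whenever $j<n_i$, because subtracting the Taylor polynomial of degree $n_i-1$ from $t^j/j!$ gives $0$. For $i$ with $j\geqslant n_i$, i.e.\ $i\geqslant\varrho_j$, the series \eqref{PR:series} combined with the elementary identity $\prescript{RL}{0}I^{\lambda}(t^j/j!)=t^{j+\lambda}/\Gamma(j+\lambda+1)$ yields
\[
\prescript{C}{0}{\mathbb{D}}_{\alpha,\beta_i,\omega}^{\theta_i}\!\left(\frac{t^j}{j!}\right)=t^{j-\beta_i}E_{\alpha,j-\beta_i+1}^{-\theta_i}(\omega t^\alpha).
\]
In particular, for $i=0$ the derivative vanishes, since $j\leqslant n_0-1<n_0$. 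Summing these contributions, weighted by $\sigma_i$, reproduces exactly the function $\Phi_j(t)$ defined in \eqref{form17}.

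Substituting the ansatz $v_j=t^j/j!+w_j$ into the homogeneous equation \eqref{eq3PrabFDE} and using linearity, I obtain the inhomogeneous equation
\[
\prescript{C}{0}{\mathbb{D}}_{\alpha,\beta_0,\omega}^{\theta_0}w_j(t)+\sum_{i=1}^{m}\sigma_i(t)\,\prescript{C}{0}{\mathbb{D}}_{\alpha,\beta_i,\omega}^{\theta_i}w_j(t)=-\Phi_j(t),
\]
together with the homogeneous initial conditions \eqref{eq4PrabFDE}. Provided $\Phi_j\in C[0,T]$ (which follows because each summand is a product of the continuous coefficient $\sigma_i$, the non-negative-power factor $t^{j-\beta_i}$ with $\Real(j-\beta_i)\geqslant 0$, and the entire Mittag-Leffler function evaluated at $\omega t^\alpha$), Theorem \ref{lem3.1PrabFDE} applies directly with $g=-\Phi_j$, producing a unique $w_j\in C^{\beta_0,n_0-1}[0,T]$ whose series representation \eqref{for27} recovers exactly the formula \eqref{form16} after relabelling the sign.

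The initial conditions \eqref{initcond:canonical} hold by construction. Uniqueness of the canonical set follows at once from the uniqueness in Theorem \ref{lem3.1PrabFDE}: the difference of two canonical solutions would solve \eqref{eq3PrabFDE} with homogeneous initial conditions, forcing it to vanish. The special cases (1)--(4) reduce to set-theoretic statements about $\mathbb{W}_j$: in case (1) every $i\geqslant 1$ satisfies $\Real\beta_i<j$, so $\varrho_j=1$; in case (2) one has $\mathbb{W}_j=\emptyset$, whence $\Phi_j\equiv 0$ and $w_j\equiv 0$; and (3)--(4) are direct combinations of these. The main obstacle I anticipate is the careful bookkeeping around the boundary situation $\Real\beta_i=j$ (where the Caputo-type derivative of $t^j/j!$ actually vanishes even though the formal expression $t^{j-\beta_i}E_{\alpha,j-\beta_i+1}^{-\theta_i}(\omega t^\alpha)$ need not), and the accompanying verification that $\Phi_j$ is genuinely continuous up to $t=0$; both are delicate but do not affect the structural reduction to Theorem \ref{lem3.1PrabFDE}.
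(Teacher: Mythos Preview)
Your proposal is correct and follows essentially the same approach as the paper: both decompose $v_j=t^j/j!+w_j$, compute $\prescript{C}{0}{\mathbb{D}}_{\alpha,\beta_i,\omega}^{\theta_i}(t^j/j!)$ via \eqref{alternativePrabh} to produce $\Phi_j$, and then invoke the machinery of Theorem~\ref{lem3.1PrabFDE}. The only organisational difference is that the paper re-runs the Picard iteration \eqref{eq5eq6PrabFDE} with $v_j^0=t^j/j!$ and carries out an explicit induction, whereas you apply Theorem~\ref{lem3.1PrabFDE} as a black box with $g=-\Phi_j$; your packaging is slightly more economical, and your flagging of the boundary case $\Real\beta_i=j$ is apt (the paper's equivalence $\varrho_j\leqslant i\Leftrightarrow\Real\beta_i\leqslant j$ and its criterion $j\geqslant n_i$ in \eqref{formula18} can disagree precisely there).
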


\begin{proof}
Following a proof similar to that of Theorem \ref{lem3.1PrabFDE}, we can show that finding the canonical set of solutions of \eqref{eq3PrabFDE}, i.e. solving \eqref{eq3PrabFDE} under the initial conditions \eqref{initcond:canonical}, is equivalent to the homogeneous version ($g(t)=0$) of the integral equation \eqref{integraleqPrabFDE}, under the correspondence $u_j(t)=\prescript{C}{0}{\mathbb{D}}_{\alpha,\beta_0,\omega}^{\theta_0}v_j(t)$ and $v_j(t)=\frac{t^j}{j!}+\prescript{}{0}{\mathbb{I}}_{\alpha,\beta_0,\omega}^{\theta_0}u_j(t)$, noting that $\frac{t^j}{j!}$ is always in $C^{\beta_0,n_0-1}[0,T]$ and the other regularity conditions are obtained as in the proof of Theorem \ref{lem3.1PrabFDE}.

Since we already solved \eqref{integraleqPrabFDE} in the proof of Theorem \ref{lem3.1PrabFDE}, we can now immediately obtain that the canonical set of solutions of \eqref{eq3PrabFDE} is given by the limit as $n\to\infty$ of the following sequence derived from \eqref{eq5eq6PrabFDE}, for each $j=0,1,\ldots,n_0-1$:
\begin{equation}\label{eq10eq11}
\begin{cases}
v^0_j (t)=\displaystyle\frac{t^j}{j!}, \\
v^n_j (t)=\displaystyle v^0_j(t)-\prescript{}{0}{\mathbb{I}}_{\alpha,\beta_0,\omega}^{\theta_0}\sum_{i=1}^{m}\sigma_i(t)\prescript{C}{0}{\mathbb{D}}_{\alpha,\beta_i,\omega}^{\theta_i}v^{n-1}_j(t),\quad n=1,2,\ldots, 
\end{cases}
\end{equation}
For $j,k\in\mathbb{N}_0$ we have 
\begin{equation*}
\frac{\mathrm{d}^k}{\mathrm{d}t^k}\left(\frac{t^j}{j!}\right)\bigg|_{t=0+}=
\begin{cases}
1,&\quad k=j, \\
0,&\quad k\neq j. 
\end{cases}
\end{equation*}
By \eqref{alternativePrabh}, we know that
\[
\prescript{C}{a}{\mathbb{D}}_{\alpha,\beta,\omega}^{\theta}\left(\frac{t^j}{j!}\right)=\prescript{RL}{a}{\mathbb{D}}_{\alpha,\beta,\omega}^{\theta}\left[\frac{t^j}{j!}-\sum_{{\color{red}k}=0}^{n_i-1}\frac{t^{k}}{{k}!}\cdot\frac{\mathrm{d}^k}{\mathrm{d}t^k}\left(\frac{t^j}{j!}\right)\bigg|_{t=0+}\right]
\]
Thus, for $j=0,1,\ldots,n_1-1$ (we choose this range of values since $n_1\geqslant n_i$ for all $i$), we get
\begin{equation}\label{formula18}
\prescript{C}{0}{\mathbb{D}}_{\alpha,\beta_i,\omega}^{\theta_i}t^j=\begin{cases}
\prescript{RL}{0}{\mathbb{D}}_{\alpha,\beta_i,\omega}^{\theta_i}t^j&\quad \text{ if }\varrho_j\leqslant i\leqslant m\quad (j\geqslant n_i), \\
0&\quad\text{ if }1\leqslant i< \varrho_j\quad (j\leqslant n_i-1). 
\end{cases}
\end{equation}
For $j=n_1,\ldots,n_0-1$ (noting that this range of values exists only if $n_0>n_1$), we have $k\leqslant n_i-1<j$ for all $i=1,\ldots,m$, and hence
\[
\prescript{C}{0}{\mathbb{D}}_{\alpha,\beta_i,\omega}^{\theta_i}t^j=\prescript{RL}{0}{\mathbb{D}}_{\alpha,\beta_i,\omega}^{\theta_i}t^j,\quad i=1,\ldots,m.
\]

Now, from \eqref{eq10eq11}, the first approximation of $v_j(t)$ is given by
\[
v^1_j(t)=\begin{cases}
\displaystyle\frac{t^j}{j!}-\prescript{}{0}{\mathbb{I}}_{\alpha,\beta_0,\omega}^{\theta_0}\sum_{i=\varrho_j}^{m}\sigma_i(t)\prescript{RL}{0}{\mathbb{D}}_{\alpha,\beta_i,\omega}^{\theta_i}\left(\frac{t^j}{j!}\right),\quad j=0,1,\ldots,n_1-1, \\
\displaystyle\frac{t^j}{j!}-\prescript{}{0}{\mathbb{I}}_{\alpha,\beta_0,\omega}^{\theta_0}\sum_{i=1}^{m}\sigma_i(t)\prescript{RL}{0}{\mathbb{D}}_{\alpha,\beta_i,\omega}^{\theta_i}\left(\frac{t^j}{j!}\right),\quad j=n_1,n_1+1,\ldots,n_0-1.
\end{cases}
\]
It is now clear that $v_j^1\in C^{\beta_0,n_0-1}[0,T]$ for any $j=0,1,\ldots,n_0-1$.

Let us now suppose that for $n\in\mathbb{N}$ the $n$th approximation is given by
\begin{align*}
v^n_j(t)=\frac{t^j}{j!}+\sum_{k=0}^{n-1}(-1)^{k+1}\prescript{}{0}{\mathbb{I}}_{\alpha,\beta_0,\omega}^{\theta_0}\left(\sum_{i=1}^{m}\sigma_i(t)\prescript{}{0}{\mathbb{I}}_{\alpha,\beta_0-\beta_i,\omega}^{\theta_0-\theta_i}\right)^{k}\sum_{i=1}^{m}\sigma_i(t)\prescript{RL}{0}{\mathbb{D}}_{\alpha,\beta_i,\omega}^{\theta_i}\left(\frac{t^j}{j!}\right)
\end{align*}
for $j=n_1,n_1+1,\ldots,n_0-1$ and
\begin{align*}
v^n_j(t)=\frac{t^j}{j!}+\sum_{k=0}^{n-1}(-1)^{k+1}\prescript{}{0}{\mathbb{I}}_{\alpha,\beta_0,\omega}^{\theta_0}\left(\sum_{i=1}^{m}\sigma_i(t)\prescript{}{0}{\mathbb{I}}_{\alpha,\beta_0-\beta_i,\omega}^{\theta_0-\theta_i}\right)^{k}\sum_{i=\varrho_j}^{m}\sigma_i(t)\prescript{RL}{0}{\mathbb{D}}_{\alpha,\beta_i,\omega}^{\theta_i}\left(\frac{t^j}{j!}\right)
\end{align*}
for $j=0,1,\ldots,n_1-1,$ and $v_j^n\in C^{\beta_0,n_0-1}[0,T]$ for $j=0,1,\ldots,n_0-1$. Inductively, we shall prove the analogous formula for the $(n+1)$th approximation. For $j=n_1,n_1+1,\ldots,n_0-1$, we obtain it by
\begin{align*}
v_j^{n+1}(t)&=\frac{t^j}{j!}-\prescript{}{0}{\mathbb{I}}_{\alpha,\beta_0,\omega}^{\theta_0}\sum_{i=1}^{m}\sigma_i(t)\prescript{C}{0}{\mathbb{D}}_{\alpha,\beta_i,\omega}^{\theta_i}v_j^{n}(t) \\
&=\frac{t^j}{j!}-\prescript{}{0}{\mathbb{I}}_{\alpha,\beta_0,\omega}^{\theta_0}\sum_{i=1}^{m}\sigma_i(t)\prescript{C}{0}{\mathbb{D}}_{\alpha,\beta_i,\omega}^{\theta_i}\left(\frac{t^j}{j!}\right) \\
&\hspace{1cm}+\prescript{}{0}{\mathbb{I}}_{\alpha,\beta_0,\omega}^{\theta_0}\sum_{i=1}^{m}\sigma_i(t)\prescript{C}{0}{\mathbb{D}}_{\alpha,\beta_i,\omega}^{\theta_i}\Bigg(\sum_{k=0}^{n-1}(-1)^{k+2} \\ &\hspace{3cm}\times\prescript{}{0}{\mathbb{I}}_{\alpha,\beta_0,\omega}^{\theta_0}\left(\sum_{i=1}^{m}\sigma_i(t)\prescript{}{0}{\mathbb{I}}_{\alpha,\beta_0-\beta_i,\omega}^{\theta_0-\theta_i}\right)^{k}\sum_{i=1}^{m}\sigma_i(t)\prescript{RL}{0}{\mathbb{D}}_{\alpha,\beta_i,\omega}^{\theta_i}\left(\frac{t^j}{j!}\right)\Bigg)
\end{align*}
Using Lemma \ref{importantproPrabFDE} and \eqref{formula18}, this becomes
\begin{align*}
v_j^{n+1}(t)&=\frac{t^j}{j!}-\prescript{}{0}{\mathbb{I}}_{\alpha,\beta_0,\omega}^{\theta_0}\sum_{i=1}^{m}\sigma_i(t)\prescript{RL}{0}{\mathbb{D}}_{\alpha,\beta_i,\omega}^{\theta_i}\left(\frac{t^j}{j!}\right) \\
&\hspace{1cm}+\sum_{k=0}^{n-1}(-1)^{k+2}\prescript{}{0}{\mathbb{I}}_{\alpha,\beta_0,\omega}^{\theta_0}\sum_{i=1}^{m}\sigma_i(t) \\ &\hspace{3cm}\times\prescript{}{0}{\mathbb{I}}_{\alpha,\beta_0-\beta_i,\omega}^{\theta_0-\theta_i}\left(\sum_{i=1}^{m}\sigma_i(t)\prescript{}{0}{\mathbb{I}}_{\alpha,\beta_0-\beta_i,\omega}^{\theta_0-\theta_i}\right)^{k}\sum_{i=1}^{m}\sigma_i(t)\prescript{RL}{0}{\mathbb{D}}_{\alpha,\beta_i,\omega}^{\theta_i}\left(\frac{t^j}{j!}\right) \\
&=\frac{t^j}{j!}-\prescript{}{0}{\mathbb{I}}_{\alpha,\beta_0,\omega}^{\theta_0}\sum_{i=1}^{m}\sigma_i(t)\prescript{RL}{0}{\mathbb{D}}_{\alpha,\beta_i,\omega}^{\theta_i}\left(\frac{t^j}{j!}\right) \\
&\hspace{1cm}+\sum_{k=0}^{n-1}(-1)^{k+2}\prescript{}{0}{\mathbb{I}}_{\alpha,\beta_0,\omega}^{\theta_0}\left(\sum_{i=1}^{m}\sigma_i(t)\prescript{}{0}{\mathbb{I}}_{\alpha,\beta_0-\beta_i,\omega}^{\theta_0-\theta_i}\right)^{k+1}\sum_{i=1}^{m}\sigma_i(t)\prescript{RL}{0}{\mathbb{D}}_{\alpha,\beta_i,\omega}^{\theta_i}\left(\frac{t^j}{j!}\right) \\
&=\frac{t^j}{j!}+\sum_{k=0}^{n} (-1)^{k+1}\prescript{}{0}{\mathbb{I}}_{\alpha,\beta_0,\omega}^{\theta_0}\left(\sum_{i=1}^{m}\sigma_i(t)\prescript{}{0}{\mathbb{I}}_{\alpha,\beta_0-\beta_i,\omega}^{\theta_0-\theta_i}\right)^{k}\sum_{i=1}^{m}\sigma_i(t)\prescript{RL}{0}{\mathbb{D}}_{\alpha,\beta_i,\omega}^{\theta_i}\left(\frac{t^j}{j!}\right). 
\end{align*}
In the same manner, for $j=0,1,\ldots,n_1-1$, one can obtain the second approximation as
\begin{align*}
v_j^{n+1}(t)=\frac{t^j}{j!}+\sum_{k=0}^{n}(-1)^{k+1}\prescript{}{0}{\mathbb{I}}_{\alpha,\beta_0,\omega}^{\theta_0}\left(\sum_{i=1}^{m}\sigma_i(t)\prescript{}{0}{\mathbb{I}}_{\alpha,\beta_0-\beta_i,\omega}^{\theta_0-\theta_i}\right)^{k}\sum_{i=\varrho_j}^{m}\sigma_i(t)\prescript{RL}{0}{\mathbb{D}}_{\alpha,\beta_i,\omega}^{\theta_i}\left(\frac{t^j}{j!}\right).
\end{align*}
In either case, $v_j^{n+1}\in C^{\beta_0,n_0-1}[0,T]$ for all $j=0,1,\ldots,n_0-1$, and the induction process is complete.

By the same argument used at the end of the proof of Theorem \ref{lem3.1PrabFDE}, we have for each $j$ that $v_j=\displaystyle{\lim_{n\to\infty}v_j^n}\in C^{\beta_0,n_0-1}[0,T]$. We have now achieved the general formula \eqref{form16} for the solution function $v_j$, with the general expression \eqref{form17} for $\Phi_j$ and the special case \eqref{form17:norho} when $j=n_1,n_1+1,\ldots,n_0-1$, after taking into account the following fact:
\[
\prescript{RL}{0}{\mathbb{D}}_{\alpha,\beta_i,\omega}^{\theta_i}\left(\frac{t^j}{j!}\right)=t^{j-\beta_i}E_{\alpha,j-\beta_i+1}^{-\theta_i}(\omega t^\alpha),
\]
which is easily proved using the series formula \eqref{PR:series} and standard facts on Riemann--Liouville differintegrals of power functions. Note that $j-\beta_i+1$ has positive real part for every $i,j$ in the sum, since $i\geqslant\varrho_j$ and therefore $j\geqslant\Real\beta_i>\Real(\beta_i-1)$.

Other special cases mentioned in the Theorem follow by analysing carefully the expression \eqref{formula18} and the definition of the $\varrho_j$. We leave the details to the interested reader.
\end{proof}

\subsection{Explicit form for solutions in the general case}

We now have explicit formulae, both for the canonical set of solutions given by the homogeneous FDE \eqref{eq3PrabFDE} with unit initial conditions \eqref{initcond:canonical} (as found in Theorem \ref{lem3.3PrabFDE}), and for the solution to the inhomogeneous FDE \eqref{eq1PrabFDE} with homogeneous initial conditions \eqref{eq4PrabFDE} (as found in Theorem \ref{lem3.1PrabFDE}). Combining these two results, we can obtain an explicit formula for the solution of the general initial value problem given by the inhomogeneous FDE \eqref{eq1PrabFDE} with the general initial conditions \eqref{eq2PrabFDE}.

\begin{thm}\label{secondthmFDEprab}
Let $\alpha,\beta_i,\theta_i,\omega\in\mathbb{C}$ with $\Real (\alpha)>0$ and $\Real (\beta_0)>\Real (\beta_1)>\cdots>\Real (\beta_{m})\geqslant0$ and $\Real (\beta_0)\not\in\mathbb{Z}$, and let $n_i=\lfloor \Real \beta_i\rfloor+1\in\mathbb{N}$ and the functions $\sigma_i,g\in C[0,T]$ for $i=0,1,\ldots,m$. Then the general initial value problem \eqref{eq1PrabFDE} and \eqref{eq2PrabFDE} has a unique solution $v\in C^{\beta_0,n_0-1}[0,T]$ and it is represented by
\[v(t)=\sum_{j=0}^{n_0-1}e_j v_j(t)+V_h(t),\]
where the functions $v_j$ are the canonical set of solutions found in Theorem \ref{lem3.3PrabFDE} and the function $V_h$ is
\[
V_h(t):=\sum_{k=0}^{\infty}(-1)^k\prescript{}{0}{\mathbb{I}}_{\alpha,\beta_0,\omega}^{\theta_0}\left(\sum_{i=1}^{m}\sigma_i(t)\prescript{}{0}{\mathbb{I}}_{\alpha,\beta_0-\beta_i,\omega}^{\theta_0-\theta_i}\right)^{k}g(t).
\]
\end{thm}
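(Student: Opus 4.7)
The plan is to use linearity of the Caputo--Prabhakar operator together with the results already in place, namely Theorem \ref{lem3.1PrabFDE} for the inhomogeneous equation with homogeneous initial data and Theorem \ref{lem3.3PrabFDE} for the canonical set. First I would verify that the proposed function $v(t)=\sum_{j=0}^{n_0-1}e_j v_j(t)+V_h(t)$ lies in $C^{\beta_0,n_0-1}[0,T]$: each $v_j$ belongs to this space by Theorem \ref{lem3.3PrabFDE}, and $V_h$ belongs to it by Theorem \ref{lem3.1PrabFDE}, and the space is a normed vector space, so any finite linear combination stays inside it.

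Next, I would check that $v$ solves the FDE \eqref{eq1PrabFDE}. Since each $\prescript{C}{0}{\mathbb{D}}_{\alpha,\beta_i,\omega}^{\theta_i}$ is a $\mathbb{C}$-linear operator (evident from the series representation \eqref{PC:series} or directly from \eqref{DPrab}), the left-hand side of \eqref{eq1PrabFDE} applied to $v$ equals
\[
\sum_{j=0}^{n_0-1}e_j\left(\prescript{C}{0}{\mathbb{D}}_{\alpha,\beta_0,\omega}^{\theta_0}v_j(t)+\sum_{i=1}^{m}\sigma_i(t)\prescript{C}{0}{\mathbb{D}}_{\alpha,\beta_i,\omega}^{\theta_i}v_j(t)\right)+\left(\prescript{C}{0}{\mathbb{D}}_{\alpha,\beta_0,\omega}^{\theta_0}V_h(t)+\sum_{i=1}^{m}\sigma_i(t)\prescript{C}{0}{\mathbb{D}}_{\alpha,\beta_i,\omega}^{\theta_i}V_h(t)\right).
\]
By Theorem \ref{lem3.3PrabFDE} each bracket in the first sum vanishes identically, and by Theorem \ref{lem3.1PrabFDE} the last bracket equals $g(t)$, so the equation is satisfied. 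For the initial conditions, using the canonical conditions \eqref{initcond:canonical} for the $v_j$ and the homogeneous conditions \eqref{eq4PrabFDE} for $V_h$, I compute $v^{(k)}(0)=\sum_{j=0}^{n_0-1}e_j v_j^{(k)}(0)+V_h^{(k)}(0)=e_k$ for $k=0,1,\ldots,n_0-1$, so \eqref{eq2PrabFDE} holds.

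Uniqueness is obtained by a subtraction argument: if $v_1,v_2\in C^{\beta_0,n_0-1}[0,T]$ are two solutions of \eqref{eq1PrabFDE}--\eqref{eq2PrabFDE}, their difference $w=v_1-v_2$ satisfies the inhomogeneous FDE with the zero right-hand side $g\equiv 0$ and with the homogeneous initial data \eqref{eq4PrabFDE}. Applying the uniqueness part of Theorem \ref{lem3.1PrabFDE} with $g=0$ forces $w\equiv 0$, so $v_1=v_2$. This completes the argument. The whole proof is essentially a superposition/linearity bookkeeping step; I do not expect any genuine technical obstacle, since the analytic heavy lifting (equivalence with an integral equation, Banach fixed point, convergence in $C^{\beta_0,n_0-1}[0,T]$) has already been done in Theorems \ref{lem3.1PrabFDE} and \ref{lem3.3PrabFDE}. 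The only point to state carefully is the $\mathbb{C}$-linearity of $\prescript{C}{0}{\mathbb{D}}_{\alpha,\beta_i,\omega}^{\theta_i}$ on $C^{\beta_0,n_0-1}[0,T]$, which is immediate from the integral definition \eqref{DPrab}.
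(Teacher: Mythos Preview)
Your argument is correct and is exactly what the paper does: its proof simply invokes Theorems \ref{lem3.1PrabFDE} and \ref{lem3.3PrabFDE} together with ``the superposition principle,'' which is precisely the linearity/subtraction bookkeeping you have written out in detail. No further comments are needed.
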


\begin{proof}
This follows from Theorem \ref{lem3.3PrabFDE}, Theorem \ref{lem3.1PrabFDE}, and the superposition principle, noting that $V_h$ is exactly the function \eqref{for27} found in Theorem \ref{lem3.1PrabFDE}.
\end{proof}

\begin{rem}\label{rem1FDEPrab}
Setting $\theta=0$ reduces the Caputo--Prabhakar derivative to the classical Caputo derivative, and it is straightforward to check that our results in this section reduce to those of \cite{analitical} when $\theta=0$.
\end{rem}

%\begin{ex}
%Let us consider the following Prabhakar-type fractional initial problem:
%\begin{equation}
%\label{exampleFDE}
%\prescript{C}{0}{\mathbb{D}}_{\alpha,\beta_0,\omega}^{\theta_0}v(t)+t^{\lambda}\prescript{C}{0}{\mathbb{D}}_{\alpha,\beta_1,\omega}^{\theta_1}v(t)=t^{\nu-1}E_{\alpha,\nu}^{\sigma}(\omega t^{\alpha}),\qquad v(0)=0,
%\end{equation}
%where $\alpha,\beta_0,\beta_1,\theta_0,\theta_1,\omega,\nu,\sigma\in\mathbb{C}$ such that $0<\Real \,\beta_1<\Real \,\beta_0<1$ and $\Real \,\alpha,\Real \,\nu>0$.
%
%Before starting to solve this FDE, we first recall a useful formula \cite[Theorem 2]{generalizedfc} which will be used in the following calculations:
%where $\rho,\mu,\omega,\nu,\sigma,\theta\in\mathbb{C}$ such that $\Real \,\rho,\Real \,\mu,\Real \,\nu>0$.
%
%By Theorem \ref{thm3.1}, the solution of the above equation \eqref{exampleFDE} is given by   
%\[v(t)=\sum_{k=0}^{\infty}(-1)^k \prescript{}{0}{\mathbb{I}}_{\alpha,\beta_0,\omega}^{\theta_0}\left(t^{\lambda}\prescript{}{0}{\mathbb{I}}_{\alpha,\beta_0-\beta_1,\omega}^{\theta_0-\theta_1}\right)^{k}t^{\nu-1}E_{\alpha,\nu}^{\sigma}(\omega t^{\alpha}).\]
%Using \eqref{estimate} repeatedly yields 
%
%\end{ex}

\subsection{Extension to operators with respect to functions}

The results proved above can be generalised by replacing the Prabhakar integrals and derivatives by the same operators taken with respect to a general monotonic $C^1$ function $\psi(t)$ satisfying $\psi(0)=0$ and $\psi'>0$ everywhere, instead of just with respect to $t$. In the setting of these generalised operators, we write the FDE as follows:
\begin{equation}\label{WRTF:eq1PrabFDE}
\prescript{C}{0}{\mathbb{D}}_{\alpha,\beta_0,\omega}^{\theta_0;\psi(t)}v(t)+\sum_{i=1}^{m}\sigma_i(t)\prescript{C}{0}{\mathbb{D}}_{\alpha,\beta_i,\omega}^{\theta_i;\psi(t)}v(t)=g(t),\quad t\in[0,T],
\end{equation}
to be solved for the unknown function $v(t)$, under the general initial conditions 
\begin{equation}\label{WRTF:eq2PrabFDE}
\left(\frac{1}{\psi'(t)}\cdot\frac{\mathrm{d}}{\mathrm{d}t}\right)^k v(t)\bigg|_{t=0+}=e_k\in{\color{red}\mathbb{C}},\quad k=0,1,\ldots,n_0-1, 
\end{equation}
where $\alpha,\beta_i,\theta_i,\omega\in\mathbb{C}$ with $\Real (\alpha)>0$ and $\Real (\beta_0)>\Real (\beta_1)>\cdots>\Real (\beta_{m})\geqslant0$ and $n_i=\lfloor \Real \beta_i\rfloor+1\in\mathbb{N}$ and the functions $\sigma_i,g\in C[0,T]$ for $i=0,1,\ldots,m$.
We also consider the corresponding homogeneous FDE:
\begin{equation}\label{WRTF:eq3PrabFDE}
\prescript{C}{0}{\mathbb{D}}_{\alpha,\beta_0,\omega}^{\theta_0;\psi(t)}v(t)+\sum_{i=1}^{m}\sigma_i(t)\prescript{C}{0}{\mathbb{D}}_{\alpha,\beta_i,\omega;\psi(t)}^{\theta_i}v(t)=0,\quad t\in[0,T],
\end{equation}
and the homogeneous initial conditions 
\begin{equation}\label{WRTF:eq4PrabFDE}
\left(\frac{1}{\psi'(t)}\cdot\frac{\mathrm{d}}{\mathrm{d}t}\right)^k v(t)\bigg|_{t=0+}=0,\quad k=0,1,\ldots,n_0-1,
\end{equation}
The analogue of Theorem \ref{lem3.1PrabFDE} for this type of problem, with respect to a function $\psi$, is as follows.

\begin{thm}\label{WRTF:lem3.1PrabFDE}
Let $\psi\in C^1[0,\infty)$ be a monotonic function with $\psi(0)=0$ and $\psi'>0$ everywhere. Let $\alpha,\beta_i,\theta_i,\omega\in\mathbb{C}$ with $\Real (\alpha)>0$ and $\Real (\beta_0)>\Real (\beta_1)>\cdots>\Real (\beta_{m})\geqslant0$ and $\Real (\beta_0)\not\in\mathbb{Z}$, and let $n_i=\lfloor \Real \beta_i\rfloor+1\in\mathbb{N}$ and the functions $\sigma_i,g\in C[0,T]$ for $i=0,1,\ldots,m$. Then the FDE \eqref{eq1PrabFDE} under the conditions \eqref{eq4PrabFDE} has a unique solution $v\in C^{\beta_0,n_0-1}_{\psi}[0,T]$, and it is represented by the following uniformly convergent series:
\[
v(t)=\sum_{k=0}^{\infty}(-1)^k \prescript{}{0}{\mathbb{I}}_{\alpha,\beta_0,\omega}^{\theta_0;\psi(t)}\left(\sum_{i=1}^{m}\sigma_i(t)\prescript{}{0}{\mathbb{I}}_{\alpha,\beta_0-\beta_i,\omega}^{\theta_0-\theta_i;\psi(t)}\right)^{k}g(t).
\]
\end{thm}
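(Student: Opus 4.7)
The plan is to exploit the conjugation relations stated at the end of Section \ref{preliPrabFDE}, which express the Prabhakar operators with respect to $\psi$ via $Q_\psi\circ(\cdot)\circ Q_\psi^{-1}$, in order to reduce this FDE to the already-solved one in Theorem \ref{lem3.1PrabFDE}. Concretely, I would set
\[
w=Q_\psi^{-1}v,\qquad \widetilde{\sigma}_i=Q_\psi^{-1}\sigma_i,\qquad \widetilde{g}=Q_\psi^{-1}g,
\]
so that $w,\widetilde{\sigma}_i,\widetilde{g}$ are functions on $[0,\psi(T)]$ (using $\psi(0)=0$, monotonicity and continuity of $\psi$). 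Applying $Q_\psi^{-1}$ to equation \eqref{WRTF:eq1PrabFDE} and inserting $Q_\psi Q_\psi^{-1}=\mathrm{id}$ appropriately, the conjugation relation for the Caputo--Prabhakar operator converts \eqref{WRTF:eq1PrabFDE} into
\[
\prescript{C}{0}{\mathbb{D}}_{\alpha,\beta_0,\omega}^{\theta_0}w(s)+\sum_{i=1}^{m}\widetilde{\sigma}_i(s)\prescript{C}{0}{\mathbb{D}}_{\alpha,\beta_i,\omega}^{\theta_i}w(s)=\widetilde{g}(s),\quad s\in[0,\psi(T)],
\]
which is exactly an FDE of the form \eqref{eq1PrabFDE}.

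Next I would verify that the initial conditions transform correctly. Because $(\tfrac{1}{\psi'(t)}\tfrac{\mathrm{d}}{\mathrm{d}t})^k v(t)=[\tfrac{\mathrm{d}^k}{\mathrm{d}s^k}w(s)]\big|_{s=\psi(t)}$ by the chain rule, the homogeneous conditions \eqref{WRTF:eq4PrabFDE} for $v$ translate to $w^{(k)}(0)=0$ for $k=0,\ldots,n_0-1$, which are the homogeneous conditions \eqref{eq4PrabFDE} for $w$. The continuity of $\widetilde{\sigma}_i,\widetilde{g}$ on $[0,\psi(T)]$ is immediate from the continuity of $\sigma_i,g$ and of $\psi^{-1}$. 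Hence Theorem \ref{lem3.1PrabFDE} applies, producing a unique $w\in C^{\beta_0,n_0-1}[0,\psi(T)]$ given by the uniformly convergent series
\[
w(s)=\sum_{k=0}^{\infty}(-1)^k \prescript{}{0}{\mathbb{I}}_{\alpha,\beta_0,\omega}^{\theta_0}\left(\sum_{i=1}^{m}\widetilde{\sigma}_i(s)\prescript{}{0}{\mathbb{I}}_{\alpha,\beta_0-\beta_i,\omega}^{\theta_0-\theta_i}\right)^{k}\widetilde{g}(s).
\]

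Finally, I would apply $Q_\psi$ to this formula. Using the conjugation relation for the Prabhakar integral (and inserting $Q_\psi^{-1}Q_\psi=\mathrm{id}$ between each factor of the $k$-fold product), every standard Prabhakar integral in the series becomes its $\psi$-version, and each $\widetilde{\sigma}_i$ becomes $\sigma_i$; the resulting series is exactly the one claimed in the Theorem. The equivalence $v\in C^{\beta_0,n_0-1}_{\psi(t)}[0,T]\iff w\in C^{\beta_0,n_0-1}[0,\psi(T)]$, together with the isometry property of $Q_\psi$ between these spaces (stated immediately before Section \ref{mainPrabFDE}), ensures that uniqueness and the regularity of the solution transfer back, and that the convergence of the series is uniform in both directions.

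The main obstacle is not really conceptual but bookkeeping: one must carefully check that all the operator identities invoked to push $Q_\psi$ through the nested sum-product expression are exactly those recorded in Section \ref{preliPrabFDE}, and that the isometric correspondence of the function spaces genuinely transfers the $C^{\beta_0,n_0-1}$-convergence of $w$ to $C^{\beta_0,n_0-1}_{\psi(t)}$-convergence of $v$. Once this is done, no new estimates in the $\psi$-setting (such as a $\psi$-analogue of the Mittag-Leffler bound \eqref{util} or a new contraction argument) are needed, because the whole analytic content has been inherited through $Q_\psi$ from the already-established Theorem \ref{lem3.1PrabFDE}.
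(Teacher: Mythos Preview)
Your proposal is correct, and it takes a genuinely different route from the paper's proof. The paper does not transport the problem through $Q_\psi$; instead it reruns the entire argument of Theorem \ref{lem3.1PrabFDE} directly in the $\psi$-setting, introducing the weighted norm $\|z\|_p=\max_{t\in[0,T]}e^{-p\psi(t)}|z(t)|$ and establishing the $\psi$-analogue of the estimate \eqref{util}, namely $\big|\prescript{RL}{0}I^{\lambda}_{\psi(t)}e^{p\psi(t)}\big|\leqslant \frac{\Gamma(\Real\lambda)}{|\Gamma(\lambda)|}\cdot p^{-\Real\lambda}e^{p\psi(t)}$, to make the contraction argument go through; it also remarks that the extra factor $\psi'(t)$ appearing in the integral bounds is harmless because $\psi\in C^1$.

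Your conjugation approach is cleaner and more conceptual: once you check that $Q_\psi^{-1}$ intertwines the $\psi$-operators with the standard ones and is multiplicative on pointwise products (so that $\sigma_i\mapsto\widetilde{\sigma}_i$ correctly), all the analytic work (contraction, convergence, regularity) is inherited verbatim from Theorem \ref{lem3.1PrabFDE}, and the isometry of $Q_\psi$ between $C^{\beta_0,n_0-1}[0,\psi(T)]$ and $C^{\beta_0,n_0-1}_{\psi}[0,T]$ carries both uniqueness and the mode of convergence across. The paper's approach has the minor advantage of displaying explicitly what the $\psi$-adapted estimates look like, which could be useful if one later wanted to quantify constants in terms of $\psi$; your approach has the advantage of making transparent that nothing new is happening analytically, and it would extend immediately to any further variant for which a conjugation relation is available.
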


\begin{proof}
The proof is identical to that of Theorem \ref{lem3.1PrabFDE}, except with all integrals, derivatives, operators, function spaces, etc. taken with respect to the function $\psi$. The first part of the proof (equivalence of the integral equation) is similar enough to be omitted entirely. For the second part, we note that the norm $\|\cdot\|_p$ would here be defined by
\[
\|z\|_{p}:=\max_{t\in[0,T]}\Big(e^{-p\psi(t)}|z(t)|\Big),
\]
and we would use the following estimate for the Riemann--Liouville integral with respect to $\psi$:
\[
\Big|\prescript{RL}{0}I^{\lambda}_{\psi(t)}e^{p\psi(t)}\Big|\leqslant \frac{\Gamma(\Real\lambda)}{\left|\Gamma(\lambda)\right|}\cdot\frac{e^{p\psi(t)}}{p^{\Real\lambda}}, \quad t,p\in \mathbb{R}_+,\;\Real\lambda>0,
\]
which follows immediately from \eqref{util} using the conjugation relations for operators with respect to $\psi$. This enables boundedness of the relevant linear operator to be shown in the same way as in the proof of Theorem \ref{lem3.1PrabFDE}. Finally, for the bounding of the integral in the third part of the proof, we will need to bound the multiplier $\psi'(t)$ as well as everything else, but this is perfectly possible since $\psi$ is assumed to be a $C^1$ function.
\end{proof}

Next, the analogue of Theorem \ref{lem3.3PrabFDE} for a canonical set of solutions with respect to a function $\psi$ is as follows.

\begin{thm}\label{WRTF:lem3.3PrabFDE}
Let $\psi\in C^1[0,\infty)$ be a monotonic function with $\psi(0)=0$  and $\psi'>0$ everywhere. Let $\alpha,\beta_i,\theta_i,\omega\in\mathbb{C}$ with $\Real (\alpha)>0$ and $\Real (\beta_0)>\Real (\beta_1)>\cdots>\Real (\beta_{m})\geqslant0$ and $\Real (\beta_0)\not\in\mathbb{Z}$, and let $n_i=\lfloor \Real \beta_i\rfloor+1\in\mathbb{N}$ and the functions $\sigma_i,g\in C[0,T]$ for $i=0,1,\ldots,m$. Then there exists a unique canonical set of solutions of equation \eqref{WRTF:eq3PrabFDE}, namely $v_{j,\psi}\in C^{\beta_0,n_0-1}_{\psi}[0,T]$ for $j=0,1,\ldots,n_0-1$ given by
\begin{equation}\label{WRTF:form16}
v_{j,\psi}(t)=\frac{\psi(t)^j}{j!}+\sum_{k=0}^{\infty} (-1)^{k+1}\prescript{}{0}{\mathbb{I}}_{\alpha,\beta_0,\omega}^{\theta_0;\psi(t)}\left(\sum_{i=1}^{m}\sigma_i(t)\prescript{}{0}{\mathbb{I}}_{\alpha,\beta_0-\beta_i,\omega}^{\theta_0-\theta_i;\psi(t)}\right)^{k}\Phi_{j}\big(\psi(t)\big),
\end{equation}
where $\Phi_j$ is the same function defined by \eqref{form17} in Theorem \ref{lem3.3PrabFDE}.
\end{thm}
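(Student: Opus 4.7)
The plan is to reduce this theorem to Theorem \ref{lem3.3PrabFDE} via the conjugation relations from Section \ref{preliPrabFDE}, exactly as done for Theorem \ref{WRTF:lem3.1PrabFDE}. First I would set $v(t)=w(\psi(t))=Q_\psi w(t)$; the conjugation identities turn each operator $\prescript{C}{0}{\mathbb{D}}_{\alpha,\beta_i,\omega}^{\theta_i;\psi(t)}$ acting on $v$ into $Q_\psi\circ\prescript{C}{0}{\mathbb{D}}_{\alpha,\beta_i,\omega}^{\theta_i}$ acting on $w$, and with the substitution $s=\psi(t)$ and $\tilde\sigma_i(s):=\sigma_i(\psi^{-1}(s))\in C[0,\psi(T)]$, equation \eqref{WRTF:eq3PrabFDE} becomes
\[
\prescript{C}{0}{\mathbb{D}}_{\alpha,\beta_0,\omega}^{\theta_0}w(s)+\sum_{i=1}^{m}\tilde\sigma_i(s)\prescript{C}{0}{\mathbb{D}}_{\alpha,\beta_i,\omega}^{\theta_i}w(s)=0,\qquad s\in[0,\psi(T)].
\]
The chain-rule identity $\left(\frac{1}{\psi'(t)}\cdot\frac{\mathrm{d}}{\mathrm{d}t}\right)^k(w\circ\psi)(t)=w^{(k)}(\psi(t))$ combined with $\psi(0)=0$ translates the canonical initial conditions in the $\psi$-sense into the standard $w_j^{(k)}(0)=\delta_{jk}$.

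Next I would apply Theorem \ref{lem3.3PrabFDE} to this transformed problem, obtaining the unique canonical set $\{w_j\}\subset C^{\beta_0,n_0-1}[0,\psi(T)]$ as a series of the form \eqref{form16} built from the coefficients $\tilde\sigma_i$ and a corresponding function $\tilde\Phi_j$ (built from $\tilde\sigma_i$ exactly as \eqref{form17} is built from $\sigma_i$). Then I would pull back by setting $v_{j,\psi}(t):=Q_\psi w_j(t)$, pushing $Q_\psi$ through each layer of the series using the two identities $Q_\psi\circ\prescript{}{0}{\mathbb{I}}_{\alpha,\beta,\omega}^{\theta}=\prescript{}{0}{\mathbb{I}}_{\alpha,\beta,\omega}^{\theta;\psi(t)}\circ Q_\psi$ and $Q_\psi(\tilde\sigma_i\cdot f)=\sigma_i\cdot Q_\psi(f)$; this should produce exactly the series \eqref{WRTF:form16}. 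The membership $v_{j,\psi}\in C^{\beta_0,n_0-1}_{\psi(t)}[0,T]$ and uniqueness then follow from the corresponding facts for $w_j$ together with the isometry property of $Q_\psi$ noted at the end of Section \ref{preliPrabFDE}.

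The main obstacle I anticipate is purely notational: identifying $Q_\psi(\tilde\Phi_j)$ with the $\Phi_j(\psi(t))$ appearing in \eqref{WRTF:form16}. Once one interprets $\Phi_j(\psi(t))$ as shorthand for $\sum_{i=\varrho_j}^{m}\sigma_i(t)\,\psi(t)^{j-\beta_i}E^{-\theta_i}_{\alpha,j-\beta_i+1}(\omega\psi(t)^\alpha)$ (the coefficients $\sigma_i$ retaining the original argument $t$, while the power and Mittag-Leffler factors use $\psi(t)$), the identification is immediate from $\tilde\sigma_i(\psi(t))=\sigma_i(t)$. A direct alternative, repeating the induction of Theorem \ref{lem3.3PrabFDE} in the $\psi$-setting and using the power rule $\prescript{RL}{0}{\mathbb{D}}_{\alpha,\beta_i,\omega}^{\theta_i;\psi(t)}\!\left(\frac{\psi(t)^j}{j!}\right)=\psi(t)^{j-\beta_i}E^{-\theta_i}_{\alpha,j-\beta_i+1}(\omega\psi(t)^\alpha)$ in place of the classical one, is viable but redundant given the availability of the conjugation relations.
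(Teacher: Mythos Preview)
Your proposal is correct, but it takes a different route from the paper. The paper's proof is a two-line remark: ``the proof is identical to that of Theorem \ref{lem3.3PrabFDE} except that everything is taken with respect to the function $\psi$'', followed by the observation that $\left(\frac{1}{\psi'(t)}\cdot\frac{\mathrm{d}}{\mathrm{d}t}\right)^k\big(\psi(t)^j/j!\big)\big|_{t=0+}=\delta_{jk}$. In other words, the paper simply re-runs the induction of Theorem \ref{lem3.3PrabFDE} in the $\psi$-setting --- precisely the ``direct alternative'' you mention and set aside at the end.

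Your main argument instead exploits the conjugation relations: you transport the whole problem to $[0,\psi(T)]$ via $Q_\psi$, invoke Theorem \ref{lem3.3PrabFDE} once as a black box, and then pull back. This is cleaner in that it avoids repeating the inductive computation, and it makes transparent why the function space, uniqueness, and convergence carry over (via the isometry property of $Q_\psi$). The cost is the bookkeeping you already flagged: the object $\Phi_j\big(\psi(t)\big)$ in \eqref{WRTF:form16} is not literally the composite of \eqref{form17} with $\psi$, since the coefficient $\sigma_i$ must keep the argument $t$ while only the power and Mittag-Leffler factors take $\psi(t)$. Your identification $Q_\psi\tilde\Phi_j(t)=\sum_{i\geqslant\varrho_j}\sigma_i(t)\,\psi(t)^{j-\beta_i}E^{-\theta_i}_{\alpha,j-\beta_i+1}(\omega\psi(t)^\alpha)$ handles this correctly. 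The paper's repeat-the-proof approach sidesteps this notational wrinkle at the price of redundancy; your conjugation approach is more economical but forces you to confront it explicitly.
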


\begin{proof}
Once again, the proof is identical to that of Theorem \ref{lem3.3PrabFDE} except {\color{red}that} everything {\color{red}is} taken with respect to the function $\psi$. We note in particular that
\begin{equation*}
\left(\frac{1}{\psi'(t)}\cdot\frac{\mathrm{d}}{\mathrm{d}t}\right)^k\left(\frac{\psi(t)^j}{j!}\right)\bigg|_{t=0+}=
\begin{cases}
1,&\quad k=j, \\
0,&\quad k\neq j,
\end{cases}
\end{equation*}
an invaluable result in constructing the canonical set of solution functions.
\end{proof}

Finally, the analogue of Theorem \ref{secondthmFDEprab} for the solution to a general initial value problem with respect to a function $\psi$ is as follows.

\begin{thm}\label{WRTF:secondthmFDEprab}
Let $\psi\in C^1[0,\infty)$ be a monotonic function with $\psi(0)=0$  and $\psi'>0$ everywhere. Let $\alpha,\beta_i,\theta_i,\omega\in\mathbb{C}$ with $\Real (\alpha)>0$ and $\Real (\beta_0)>\Real (\beta_1)>\cdots>\Real (\beta_{m})\geqslant0$ and $\Real (\beta_0)\not\in\mathbb{Z}$, and let $n_i=\lfloor \Real \beta_i\rfloor+1\in\mathbb{N}$ and the functions $\sigma_i,g\in C[0,T]$ for $i=0,1,\ldots,m$. Then the general initial value problem \eqref{WRTF:eq1PrabFDE} and \eqref{WRTF:eq2PrabFDE} has a unique solution $v\in C^{\beta_0,n_0-1}_{\psi}[0,T]$ and it is represented by
\[v(t)=\sum_{j=0}^{n_0-1}e_j v_{j,\psi}(t)+V_{h,\psi}(t),\]
where the functions $v_{j,\psi}$ are the canonical set of solutions found in Theorem \ref{WRTF:lem3.3PrabFDE} and the function $V_{h,\psi}$ is
\[
V_{h,\psi}(t):=\sum_{k=0}^{\infty}(-1)^k\prescript{}{0}{\mathbb{I}}_{\alpha,\beta_0,\omega}^{\theta_0;\psi(t)}\left(\sum_{i=1}^{m}\sigma_i(t)\prescript{}{0}{\mathbb{I}}_{\alpha,\beta_0-\beta_i,\omega}^{\theta_0-\theta_i;\psi(t)}\right)^{k}g(t).
\]
\end{thm}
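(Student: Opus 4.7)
The plan is to combine Theorems \ref{WRTF:lem3.1PrabFDE} and \ref{WRTF:lem3.3PrabFDE} via the superposition principle, in the same way that Theorem \ref{secondthmFDEprab} was obtained from Theorems \ref{lem3.1PrabFDE} and \ref{lem3.3PrabFDE}. By Theorem \ref{WRTF:lem3.3PrabFDE}, the functions $v_{j,\psi}$ lie in $C^{\beta_0,n_0-1}_\psi[0,T]$, solve the homogeneous equation \eqref{WRTF:eq3PrabFDE}, and (from the canonical construction, together with the evaluation of $(\psi'(t)^{-1}\mathrm{d}/\mathrm{d}t)^k(\psi(t)^j/j!)$ at $t=0+$ noted in the proof of Theorem \ref{WRTF:lem3.3PrabFDE}) satisfy the Kronecker-type initial conditions
\[
\left(\frac{1}{\psi'(t)}\cdot\frac{\mathrm{d}}{\mathrm{d}t}\right)^k v_{j,\psi}(t)\bigg|_{t=0+}=\begin{cases}1, & k=j,\\ 0, & k\neq j,\end{cases}\qquad j,k=0,1,\dots,n_0-1.
\]
By Theorem \ref{WRTF:lem3.1PrabFDE}, the function $V_{h,\psi}$ lies in the same function space, solves the inhomogeneous equation \eqref{WRTF:eq1PrabFDE}, and satisfies the homogeneous initial conditions \eqref{WRTF:eq4PrabFDE}.

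For existence, I would observe that both sides of \eqref{WRTF:eq1PrabFDE} depend linearly on $v$, as does the $\psi$-derivative operator $(\psi'(t)^{-1}\mathrm{d}/\mathrm{d}t)^k$; therefore the finite superposition $v(t)=\sum_{j=0}^{n_0-1}e_j v_{j,\psi}(t)+V_{h,\psi}(t)$ still belongs to $C^{\beta_0,n_0-1}_\psi[0,T]$, satisfies \eqref{WRTF:eq1PrabFDE} everywhere on $[0,T]$, and, by the Kronecker property above combined with the vanishing of all $\psi$-derivatives of $V_{h,\psi}$ at $0+$, meets the prescribed initial conditions \eqref{WRTF:eq2PrabFDE}.

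For uniqueness, I would argue that if two functions $v_1,v_2\in C^{\beta_0,n_0-1}_\psi[0,T]$ both satisfy \eqref{WRTF:eq1PrabFDE}--\eqref{WRTF:eq2PrabFDE}, then $v_1-v_2$ solves \eqref{WRTF:eq3PrabFDE} under the homogeneous initial conditions \eqref{WRTF:eq4PrabFDE}; applying Theorem \ref{WRTF:lem3.1PrabFDE} with $g\equiv 0$ forces $v_1-v_2\equiv 0$. An equivalent route, which I would mention in a sentence, is to invoke the natural isometry $Q_\psi:C^{\beta_0,n_0-1}[0,T]\to C^{\beta_0,n_0-1}_\psi[0,T]$ and the conjugation relations from the preliminaries to transfer the entire initial value problem to that covered by Theorem \ref{secondthmFDEprab}, where existence and uniqueness are already known.

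I do not anticipate any real obstacle: the statement is a formal corollary of the two preceding theorems. The only items requiring care are the initial-condition bookkeeping for the superposition (handled by the Kronecker property) and confirming that the $\psi$-analogue of the superposition principle is literally the same as in the unweighted case, which is clear from the linearity of \eqref{WRTF:eq1PrabFDE} in $v$.
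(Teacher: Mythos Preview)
Your proposal is correct and follows essentially the same approach as the paper, which simply states that the proof ``follows exactly the same lines as the previous proof in the case $\psi(t)=t$'' and omits the details. Your write-up is in fact more detailed than the paper's, spelling out the superposition and uniqueness arguments that the authors leave implicit.
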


\begin{proof}
Here the proof follows exactly the same lines as the previous proof in the case $\psi(t)=t$ (not with respect to a function). We omit the straightforward details.
\end{proof}

\section{Examples}\label{FDEPrabconstcoe}

In this section, to illustrate the general results achieved above, we will study the same initial value problems under the assumption that the coefficient functions $\sigma_i(t)$ are actually constant functions. Thus, we consider the following Prabhakar-type linear differential equation with constant coefficients:
\begin{equation}\label{eq1PrabFDEconst}
\prescript{C}{0}{\mathbb{D}}_{\alpha,\beta_0,\omega}^{\theta}v(t)+\sum_{i=1}^{m}\sigma_i\prescript{C}{0}{\mathbb{D}}_{\alpha,\beta_i,\omega}^{\theta}v(t)=g(t),\quad t\in[0,T],
\end{equation}
under the same initial conditions \eqref{eq2PrabFDE}, where $\sigma_i,\alpha,\beta_i,\theta,\omega\in\mathbb{C}$ with $\Real (\alpha)>0$ and $\Real (\beta_0)>\Real (\beta_1)>\cdots>\Real (\beta_{\it m})\geqslant0$ and $g\in C[0,T]$ and $n_i=\lfloor \Real \beta_i\rfloor+1\in\mathbb{N}$ for $i=0,1,\ldots,m$. 

Notice that we are not considering a whole set of different parameters $\theta_i$ in the equation \eqref{eq1PrabFDEconst}, as we did in the previous section. Instead, we have fixed $\theta_i=\theta$ for all $i=0,1,\ldots,m$. This is to enable us to get easier representations of the solution functions, using multivariate Mittag-Leffler functions that already exist in the literature \cite{luchko}, which we recall as follows.

%We also study the homogeneous case  
%\begin{equation}\label{eq3PrabFDEconst}
%^{MC}\mathbb{D}_{\alpha,\beta_0,\omega,0+}^{\theta_0}v(t)+\sum_{i=1}^{m}\sigma_i\,^{MC}\mathbb{D}_{\alpha,\beta_i,\omega,0+}^{\theta_0}v(t)=0,\quad t\in[0,T],
%\end{equation}
%under the initial conditions \eqref{eq4PrabFDE}.

\begin{defn}
The multivariate Mittag-Leffler function of $n$ complex variables $z_1,\ldots,z_n$, with $n+1$ complex parameters $\alpha_1,\ldots,\alpha_n,\beta$ satisfying $\Real\alpha_i,\Real\beta>0$ for $i=1,\ldots,n$, is defined by
\begin{align*}
E_{(\alpha_1,\ldots,\alpha_n),\beta}(z_1,\ldots,z_n)&=\sum_{k=0}^{\infty}\sum_{\substack{k_1+\cdots+k_n= k, \\ k_1,\ldots,k_n\geq0}}\frac{k!}{k_1!\times\cdots\times k_n!}\cdot\frac{\displaystyle{\prod_{i=1}^n z_i^{k_i}}}{\Gamma\left(\beta+\displaystyle{\sum_{i-1}^n\alpha_i k_i}\right)}, \\
&=\sum_{k_1,\ldots,k_n\geq0}\frac{(k_1+\cdots+k_n)!}{k_1!\times\cdots\times k_n!}\cdot\frac{z_1^{k_1}\times\cdots\times z_n^{k_n}}{\Gamma\left(\beta+\alpha_1 k_1+\cdots+\alpha_n k_n\right)},
\end{align*}
where the multiple series is locally uniformly convergent for all $(z_1,\cdots,z_n)\in\mathbb{C}^n$ under the given conditions on the parameters.
\end{defn}

We now establish the main results of this section.

\begin{thm}\label{thm3.1FDEPrabconst}
Let $\sigma_i,\alpha,\beta_i,\theta,\omega\in\mathbb{C}$ with $\Real (\alpha)>0$ and $\Real (\beta_0)>\Real (\beta_1)>\cdots>\Real (\beta_{\it m})\geqslant0$ and $\Real(\beta_0)\not\in\mathbb{Z}$, and let $n_i=\lfloor \Real \beta_i\rfloor+1\in\mathbb{N}$ for $i=0,1,\ldots,m$, and $g\in C[0,T]$. Then the FDE \eqref{eq1PrabFDEconst} with homogeneous initial conditions \eqref{eq4PrabFDE} has a unique solution $v\in C^{\beta_0,n_0-1}[0,T]$ and it is represented by
\begin{align*}
v(t)=\sum_{n=0}^{\infty}&\frac{(\theta)_n\omega^n}{n!}\int_0^t s^{\beta_0+\alpha n-1}\times \\ 
&\times E_{(\beta_0-\beta_1,\ldots,\beta_0-\beta_m),\alpha n+\beta_0}(-\sigma_1 s^{\beta_0-\beta_1},\ldots,-\sigma_m s^{\beta_0-\beta_m})g(t-s)ds.
\end{align*}
\end{thm}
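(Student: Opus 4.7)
The plan is to specialize Theorem \ref{lem3.1PrabFDE} to the constant-coefficient setting (with $\theta_i=\theta$ for all $i$) and then unfold the resulting nested-integral series into a single integral whose kernel involves the multivariate Mittag-Leffler function.

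First, I would invoke Theorem \ref{lem3.1PrabFDE} directly. Since $\theta_0-\theta_i=0$ for every $i\geqslant 1$, the inner integral operator reduces to a Riemann--Liouville integral via the identity $\prescript{}{0}{\mathbb{I}}_{\alpha,\beta,\omega}^{0}=\prescript{RL}{0}I^{\beta}$. Hence the unique solution $v\in C^{\beta_0,n_0-1}[0,T]$ takes the form
\[
v(t)=\sum_{k=0}^{\infty}(-1)^k\,\prescript{}{0}{\mathbb{I}}_{\alpha,\beta_0,\omega}^{\theta}\left(\sum_{i=1}^{m}\sigma_i\,\prescript{RL}{0}I^{\beta_0-\beta_i}\right)^{k}g(t).
\]
Because the operators $\prescript{RL}{0}I^{\beta_0-\beta_i}$ commute with one another and with the constants $\sigma_i$, and because of the semigroup property $\prescript{RL}{0}I^{\lambda_1}\prescript{RL}{0}I^{\lambda_2}=\prescript{RL}{0}I^{\lambda_1+\lambda_2}$, the multinomial theorem applies:
\[
\left(\sum_{i=1}^{m}\sigma_i\,\prescript{RL}{0}I^{\beta_0-\beta_i}\right)^{k}=\sum_{\substack{k_1+\cdots+k_m=k\\ k_i\geqslant 0}}\frac{k!}{k_1!\cdots k_m!}\prod_{i=1}^{m}\sigma_i^{k_i}\cdot\prescript{RL}{0}I^{\sum_{i=1}^{m}(\beta_0-\beta_i)k_i}.
\]

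Next, I would apply the series formula \eqref{PI:series} to the outer Prabhakar integral, obtaining
\[
\prescript{}{0}{\mathbb{I}}_{\alpha,\beta_0,\omega}^{\theta}\,\prescript{RL}{0}I^{\sum_i(\beta_0-\beta_i)k_i}g(t)=\sum_{n=0}^{\infty}\frac{(\theta)_n\omega^n}{n!}\,\prescript{RL}{0}I^{\alpha n+\beta_0+\sum_i(\beta_0-\beta_i)k_i}g(t),
\]
and then expand each Riemann--Liouville integral back to its integral representation $\prescript{RL}{0}I^{\gamma}g(t)=\frac{1}{\Gamma(\gamma)}\int_0^t s^{\gamma-1}g(t-s)\,\mathrm{d}s$. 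Interchanging the (absolutely convergent) sums with the integral and collapsing the double index $(k,k_1,\dots,k_m)$ into the single multi-index sum $\sum_{k_1,\dots,k_m\geqslant 0}$, the inner expression becomes exactly
\[
s^{\alpha n+\beta_0-1}\sum_{k_1,\dots,k_m\geqslant 0}\frac{(k_1+\cdots+k_m)!}{k_1!\cdots k_m!}\cdot\frac{\prod_{i=1}^{m}(-\sigma_i s^{\beta_0-\beta_i})^{k_i}}{\Gamma\!\big(\alpha n+\beta_0+\sum_i(\beta_0-\beta_i)k_i\big)},
\]
which is precisely $s^{\alpha n+\beta_0-1}E_{(\beta_0-\beta_1,\dots,\beta_0-\beta_m),\alpha n+\beta_0}(-\sigma_1 s^{\beta_0-\beta_1},\dots,-\sigma_m s^{\beta_0-\beta_m})$ by the definition of the multivariate Mittag-Leffler function. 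This yields the announced formula.

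The main obstacle will be the rigorous justification of rearranging and interchanging the several infinite sums with the integral. The outer $n$-sum converges uniformly thanks to the convergence of \eqref{PI:series}; the $k$-sum converges uniformly by the Banach-fixed-point estimate already established in Theorem \ref{lem3.1PrabFDE}; and the locally uniform convergence of the multivariate Mittag-Leffler series on compact subsets of $\mathbb{C}^m$ (which in particular is uniform for $s\in[0,T]$) controls the multinomial rearrangement. Combining these three facts via Fubini/Tonelli on the compact interval $[0,t]$ allows all rearrangements to be made legitimate, and collecting the results gives the claimed closed-form representation.
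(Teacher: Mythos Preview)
Your proposal is correct and follows essentially the same route as the paper: specialise Theorem~\ref{lem3.1PrabFDE} with $\theta_i=\theta$, expand the $k$th power via the multinomial theorem using the semigroup property, apply the series formula~\eqref{PI:series} for the remaining Prabhakar integral, and then rearrange the sums under the integral to recognise the multivariate Mittag-Leffler kernel. The only cosmetic difference is that the paper merges the outer and inner operators into a single Prabhakar integral via~\eqref{PI:semi} before applying~\eqref{PI:series}, whereas you keep them separate and compose the resulting Riemann--Liouville integrals afterwards; the computations are equivalent, and your explicit discussion of the convergence justifications is, if anything, more careful than the paper's.
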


\begin{proof}
By Theorem \ref{lem3.1PrabFDE}, we know that the FDE \eqref{eq1PrabFDEconst} with the conditions \eqref{eq4PrabFDE} has a unique solution $v\in C^{\beta_0,n_0-1}[0,T]$ given by
\[
v(t)=\sum_{k=0}^{\infty}(-1)^k \prescript{}{0}{\mathbb{I}}_{\alpha,\beta_0,\omega}^{\theta}\left(\sum_{i=1}^{m}\sigma_i\prescript{}{0}{\mathbb{I}}_{\alpha,\beta_0-\beta_i,\omega}^{0}\right)^{k}g(t).
\]
By the semigroup property of the Prabhakar fractional integral \eqref{PI:semi}, the process of taking a finite sum of Prabhakar integral terms and raising it to a finite power will look exactly like doing the same thing with just numbers, as the ``powers'' of Prabhakar integrals combine according to the semigroup property. (This can be formalised using Mikusi\'nski's operational calculus for an algebraic interpretation of Prabhakar operators \cite{rani-fernandez1}, but in this case it is clear from direct calculation.) So, we get a multinomial expansion leading to a multivariate Mittag-Leffler function as follows:
\begin{align}
v(t)&=\sum_{k=0}^{\infty}(-1)^k \prescript{}{0}{\mathbb{I}}_{\alpha,\beta_0,\omega}^{\theta}\left(\sum_{i=1}^{m}\sigma_i\prescript{}{0}{\mathbb{I}}_{\alpha,\beta_0-\beta_i,\omega}^{0}\right)^{k}g(t) \nonumber \\
&=\sum_{k=0}^{\infty}(-1)^k \prescript{}{0}{\mathbb{I}}_{\alpha,\beta_0,\omega}^{\theta}\left(\sum_{k_1+\cdots+k_m=k}\frac{k!}{k_1!\times\cdots\times k_m!}\prod_{i=1}^{m}\sigma_i^{k_i}\prescript{}{0}{\mathbb{I}}_{\alpha,(\beta_0-\beta_i)k_i,\omega}^{0}\right)g(t) \nonumber \\
&=\sum_{k=0}^{\infty}(-1)^k \left(\sum_{k_1+\cdots+k_m=k}\frac{k!}{k_1!\times\cdots\times k_m!}\prod_{i=1}^{m}\sigma_i^{k_i}\right)\prescript{}{0}{\mathbb{I}}_{\alpha,\beta_0+\sum_{i=1}^{m}(\beta_0-\beta_i)k_{i},\omega}^{\theta}g(t) \nonumber \\
&=\sum_{k=0}^{\infty}\left(\sum_{k_1+\cdots+k_m=k}\frac{k!\prod_{i=1}^{m}(-\sigma_i)^{k_i}}{k_1!\times\cdots\times k_m!}\right)\sum_{n=0}^{\infty}\frac{(\theta)_n\omega^n}{n!}\prescript{RL}{0}I^{n\alpha+\beta_0+\sum_{i=1}^{m}(\beta_0-\beta_i)k_{i}}g(t). \label{multisum}
\end{align}
Writing the Riemann--Liouville integral explicitly using the integral \eqref{fraci}, and changing the places of integrals and sums, we can arrive at:
\begin{multline*}
v(t)=\sum_{n=0}^{\infty}\frac{(\theta)_n\omega^n}{n!}\int_0^t s^{\beta_0+\alpha n-1} \times \\
\times E_{(\beta_0-\beta_1,\ldots,\beta_0-\beta_m),\alpha n+\beta_0}\left(-\sigma_1 s^{\beta_0-\beta_1},\ldots,-\sigma_m s^{\beta_0-\beta_m}\right)g(t-s)\,\mathrm{d}s,
\end{multline*}
which is the final answer.
\end{proof}

\begin{thm}
Let $\sigma_i,\alpha,\beta_i,\theta,\omega\in\mathbb{C}$ with $\Real (\alpha)>0$ and $\Real (\beta_0)>\Real (\beta_1)>\cdots>\Real (\beta_{\it m})\geqslant0$ and $\Real(\beta_0)\not\in\mathbb{Z}$, and let $n_i=\lfloor \Real \beta_i\rfloor+1\in\mathbb{N}$ for $i=0,1,\ldots,m$, and $g\in C[0,T]$. Then the FDE \eqref{eq1PrabFDEconst} with general initial conditions \eqref{eq2PrabFDE} has a unique solution $v\in C^{\beta_0,n_0-1}[0,T]$ and it is represented by:
\[v(t)=\sum_{j=0}^{n_0-1}e_j v_j(t)+V_h^{c}(t),\]
where $V_h^c$ is given by
\begin{multline*}
V_h^{c}(t):=\sum_{n=0}^{\infty}\frac{(\theta)_n\omega^n}{n!}\int_0^t s^{\beta_0+\alpha n-1} \times \\
\times E_{(\beta_0-\beta_1,\ldots,\beta_0-\beta_m),\alpha n+\beta_0}\left(-\sigma_1 s^{\beta_0-\beta_1},\ldots,-\sigma_m s^{\beta_0-\beta_m}\right)g(t-s)\,\mathrm{d}s,
\end{multline*}
and the canonical set of solutions $v_j$ to the constant-coefficient problem are defined by
\begin{multline*}
v_j(t)=\frac{t^j}{j!}-\sum_{n=0}^{\infty}\frac{(\theta)_n\omega^n}{n!}\int_0^t s^{\beta_0+\alpha n-1} \times \\
\times E_{(\beta_0-\beta_1,\ldots,\beta_0-\beta_m),\alpha n+\beta_0}\left(-\sigma_1 s^{\beta_0-\beta_1},\ldots,-\sigma_m s^{\beta_0-\beta_m}\right)\Phi_j(t-s)\,\mathrm{d}s,
\end{multline*}
with the function $\Phi_j$ defined in the same way as in Theorem \ref{lem3.3PrabFDE}, by the formula \eqref{form17} with special cases given by \eqref{form17:norho}, \eqref{form17:zero}, etc.
\end{thm}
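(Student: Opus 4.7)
The plan is to combine Theorem \ref{secondthmFDEprab} with the constant-coefficient simplifications already performed in Theorem \ref{thm3.1FDEPrabconst}. Since equation \eqref{eq1PrabFDEconst} is the special case of \eqref{eq1PrabFDE} with $\theta_i = \theta$ and $\sigma_i(t) \equiv \sigma_i \in \mathbb{C}$, Theorem \ref{secondthmFDEprab} applies directly and immediately gives both existence and uniqueness of the solution in $C^{\beta_0,n_0-1}[0,T]$, together with the decomposition $v(t) = \sum_{j=0}^{n_0-1} e_j v_j(t) + V_h(t)$, where $V_h$ is the unique solution of the constant-coefficient FDE with homogeneous initial conditions, and the $v_j$ form the canonical set of solutions for \eqref{eq3PrabFDE} with $\sigma_i$ constant and $\theta_i = \theta$. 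So the work reduces to identifying $V_h$ with the explicit formula for $V_h^c$, and rewriting the $v_j$ in the stated closed form.

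The first identification is immediate: $V_h = V_h^c$ is exactly the content of Theorem \ref{thm3.1FDEPrabconst}, since both sides solve the same initial value problem (inhomogeneous FDE with homogeneous initial conditions) and uniqueness applies. So I would simply cite that result for this half.

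For the canonical functions $v_j$, I would start from the general formula \eqref{form16} of Theorem \ref{lem3.3PrabFDE}, specialised to constant $\sigma_i$ and constant $\theta_i = \theta$ (so that every $\theta_0 - \theta_i$ equals $0$ and the inner operators become $\prescript{}{0}{\mathbb{I}}_{\alpha,\beta_0-\beta_i,\omega}^{0}$):
\[
v_j(t) = \frac{t^j}{j!} + \sum_{k=0}^{\infty} (-1)^{k+1}\, \prescript{}{0}{\mathbb{I}}_{\alpha,\beta_0,\omega}^{\theta}\!\left(\sum_{i=1}^{m}\sigma_i\, \prescript{}{0}{\mathbb{I}}_{\alpha,\beta_0-\beta_i,\omega}^{0}\right)^{\!k} \Phi_j(t).
\]
Because $\sigma_i$ are now just scalars, the inner sum commutes with itself in the $k$th power, and by the semigroup property \eqref{PI:semi} applied iteratively (the $\theta$-parameters add to zero on the inside, so only the $\beta$-parameters accumulate), the multinomial expansion from the proof of Theorem \ref{thm3.1FDEPrabconst} goes through verbatim. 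Combining it with the series formula \eqref{PI:series} for $\prescript{}{0}{\mathbb{I}}_{\alpha,\beta_0,\omega}^{\theta}$ and regrouping the multinomial coefficients into the multivariate Mittag-Leffler function exactly as in \eqref{multisum} above, I would obtain
\[
v_j(t) = \frac{t^j}{j!} - \sum_{n=0}^{\infty} \frac{(\theta)_n \omega^n}{n!} \int_0^t s^{\beta_0 + \alpha n - 1} E_{(\beta_0-\beta_1,\ldots,\beta_0-\beta_m),\alpha n + \beta_0}\big(-\sigma_1 s^{\beta_0-\beta_1},\ldots,-\sigma_m s^{\beta_0-\beta_m}\big) \Phi_j(t-s)\,\mathrm{d}s,
\]
which is the claimed expression. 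The sign shift (minus sign in front of the integral, as opposed to the plus sign appearing for $V_h^c$) is produced by the $(-1)^{k+1}$ in \eqref{form16} versus the $(-1)^k$ in \eqref{for27}.

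The main obstacles I anticipate are essentially bookkeeping rather than conceptual: (i) being careful about the absolute convergence needed to exchange the inner sum (multinomial expansion), the series in $n$ coming from \eqref{PI:series}, and the Riemann--Liouville integral in the definition of the fractional integral, so as to justify the regrouping into a multivariate Mittag--Leffler function; (ii) tracking the sign correctly so that the particular solution $V_h^c$ and the canonical solutions $v_j$ come out with the opposite signs in front of the integral, as noted above. Both points are resolved by the same argument already used in the proof of Theorem \ref{thm3.1FDEPrabconst}, which is why I expect the proof to be short and to consist essentially of a reference to Theorems \ref{secondthmFDEprab} and \ref{thm3.1FDEPrabconst} together with the observation that the same multinomial/semigroup computation applies with $g$ replaced by $-\Phi_j$.
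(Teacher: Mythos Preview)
Your proposal is correct and follows essentially the same route as the paper: invoke Theorem \ref{secondthmFDEprab} for existence, uniqueness and the decomposition, identify $V_h$ with $V_h^c$ via Theorem \ref{thm3.1FDEPrabconst}, and then observe that the canonical solutions \eqref{form16} differ from \eqref{for27} only by the leading $\frac{t^j}{j!}$, a sign, and the replacement of $g$ by $\Phi_j$, so the same multinomial/semigroup simplification yields the stated formula for $v_j$. The paper's proof is simply a terser version of exactly this argument.
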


\begin{proof}
This follows from the general result of Theorem \ref{secondthmFDEprab}. The function $V_h$ found in Theorem \ref{secondthmFDEprab} now becomes the function $V_h^c$ which we already simplified in Theorem \ref{thm3.1FDEPrabconst} above. By comparing the formulae \eqref{form16} and \eqref{for27}, we can observe that, in general, the function $v_j(t)$ is exactly $\frac{t^j}{j!}$ minus the function $V_h$ with $g$ replaced by $\Phi_j$. This gives us the expressions stated here for the $v_j$ functions.
\end{proof}

\begin{rem}
A general linear constant-coefficient Caputo--Prabhakar fractional differential equation of the form \eqref{eq1PrabFDEconst} was already studied and solved in \cite{rani-fernandez2}, using the method of Mikusi\'nski's operational calculus. The solution found there is consistent with the one we have found here, just expressed in a different form due to a different choice in how to manage the multiple sums.

What emerges in \eqref{multisum} is a multiple sum that combines a sum over $n$ (corresponding to the Prabhakar function) with a sum over $k_1,\cdots,k_m$ (corresponding to a multivariate Mittag-Leffler function). In our results above, we have simplified this to a single sum over $n$ of an expression involving multivariate Mittag-Leffler functions whose parameters depend on $n$. In the previous work of \cite{rani-fernandez2}, the same expression was simplified to a sum over $k_1,\cdots,k_m$ of an expression involving a Prabhakar function whose parameters depend on $k_1,\cdots,k_m$. These are two different valid choices for how to simplify the complicated expression, and so our work here complements that of \cite{rani-fernandez2} by expressing the same solution function in a different form, a single sum of multivariate Mittag-Leffler functions rather than a multiple sum of univariate Mittag-Leffler functions.
\end{rem}

\section{Acknowledgements}

\noindent The second and third authors were supported by the Nazarbayev University Program 091019CRP2120. The second author was also supported  by the FWO Odysseus 1 grant G.0H94.18N: Analysis and Partial Differential Equations and the Methusalem programme of the Ghent University Special Research Fund (BOF) (Grant number 01M01021).

\end{document}